\documentclass[12pt]{amsart}

\usepackage{latexsym,amssymb,amsmath,amsthm,amscd,graphicx, xcolor, enumerate}


\usepackage{times}
\usepackage{microtype}
\usepackage{setspace}
\usepackage[margin=1.2in]{geometry}
\usepackage[T2A]{fontenc}
\usepackage[utf8]{inputenc}
\usepackage[russian,english]{babel}
\usepackage{mathtools}
\usepackage{lmodern}


\usepackage{cite}

\usepackage[colorlinks=true, pdfstartview=FitV, linkcolor=blue,
citecolor=blue, urlcolor=blue]{hyperref}


\setlength{\parskip}{3mm}
\numberwithin{equation}{section}
\allowdisplaybreaks


\numberwithin{equation}{section}
\newtheorem{theorem}{Theorem}[section]
\newtheorem{lemma}[theorem]{Lemma}

\theoremstyle{definition}

\newtheorem{remark}[theorem]{Remark}
\newtheorem{definition}[theorem]{Definition}

\theoremstyle{remark}

\title[Bochkarev's inequalities in the Anisotropic grand Lorentz Spaces]{Bochkarev's inequalities in the Anisotropic grand Lorentz Spaces}

\author[N. T. Tleukhanova]{Nazerke T. Tleukhanova}
\address	{Nazerke T. Tleukhanova\\
Department of Fundamental Mathematics\\
		L.N. Gumilyov Eurasian National University\\
		13, Kazhymukan  Ave, 010000  Astana\\
		Kazakhstan \\ }
\email{tleukhanova@rambler.ru}

\author[M. Manarbek]{Makhpal Manarbek}
\address{Makhpal Manarbek\\ Department of Fundamental Mathematics\\
		L.N. Gumilyov Eurasian National University,\\
		and
		Institute of Mathematics and Mathematical Modelling, Almaty
		Kazakhstan}
\email{manarbek@math.kz}
\begin{document}

\begin{abstract}  
The main aim of this paper is to obtain Bochkarev-type inequalities for the anisotropic grand Lorentz spaces. 
In the classical setting, Bochkarev obtained inequalities of the Hardy--Littlewood type, which reveal the connection between the integral properties of functions and the summability of their Fourier coefficients. 
His results describe the behavior of trigonometric series in the Lorentz spaces $L_{2,r}$ for $2 < r \le \infty$. 
In this work, we extend these ideas to the framework of anisotropic grand Lorentz spaces. Using an approach based on the extrapolation of linear operators, we derive new Bochkarev-type inequalities that generalize the classical results to the case of anisotropic grand  Lorentz spaces and arbitrary orthonormal systems. We investigate the summability properties of the Fourier coefficients of functions from anisotropic grand Lorentz spaces.
\end{abstract}  
		\maketitle	
		\section{Introduction}
		One of the principal directions of harmonic analysis is the theory of Fourier series \cite{1,2,3,4}.
Interest in this area is driven by its applications across modern mathematics and the applied sciences,
as well as by the presence of many difficult open problems\cite{5, 6, 7, 8, 9, 10, 11, 12, 13, 14, 15, 16}.
Parseval's identity is perhaps the most familiar result of this type \cite{Bari1961}. 
Let $f$ be an integrable function on $[0,1]$, and let 
$\{\varphi_k\}_{k=1}^{\infty}$ be an orthonormal system in $L_2([0,1])$. 
The Fourier coefficients of $f$ with respect to this system are defined by
\[
a_k = a_k(f) := \int_0^1 f(x)\,\overline{\varphi_k(x)}\, dx .
\]

When $f \in L_2([0,1])$, Parseval's identity asserts that
\[
\|f\|_{L_2} = \|a\|_{\ell_2} := 
\left( \sum_{k \in \mathbb{Z}} |a_k|^2 \right)^{1/2},
\]
where
\[
a_k = \int_0^1 f(x) e^{-2\pi i k x}\, dx
\]
are the standard Fourier coefficients associated with the trigonometric 
orthonormal system.
 \\
		Historically in 1956, Stein \cite{21} proved the Hardy--Littlewood inequality for Lorentz spaces, which generalizes Paley’s inequality:
		
		\begin{enumerate}
			\item If $1<p<2$, $0<q \leq \infty$, $p^{\prime} = \tfrac{p}{p-1}$, and $f \in L_{p,q}[0,1]$, then
			\[
			\left(\sum_{k=1}^{\infty} k^{\tfrac{q}{p^{\prime}}-1} (a_k^*)^q \right)^{\tfrac{1}{q}}
			\leq c \, \|f\|_{L_{p,q}[0,1]}.
			\]
			
			\item If $2<p<\infty$, $0<q \leq \infty$, and $\sum_{k=1}^{\infty} k^{\tfrac{q}{p^{\prime}}-1}(a_k^*)^q < \infty$, then there exists $f \in L_{p,q}$ such that $a_n(f) = a_n$, and
			\[
			\|f\|_{L_{p,q}[0,1]} \leq c \left(\sum_{k=1}^{\infty} k^{\tfrac{q}{p^{\prime}}-1}(a_k^*)^q \right)^{\tfrac{1}{q}}.
			\]
		\end{enumerate}
		
		Here, for $p=q$ one recovers Paley’s inequality.  
		In the case $1<p<2$, all these inequalities provide necessary conditions for a function to belong to $L_p$ or $L_{p,q}$, while for $2<p<\infty$ they provide sufficient conditions.
		
		The problem of estimating Fourier coefficients of functions in Lorentz spaces $L_{2,q}$, with $2<q \leq \infty$, remained open until the late 20th century.  
		
	Only in 1997, Bochkarev \cite{16} established that, unlike in the case of $L_{p,r}$ with $1<p<2$, $1 \leq r \leq \infty$, in Lorentz spaces $L_{2,q}$, $2<q \leq \infty$, a direct analogue of the Hausdorff--Young--Riesz theorem does not hold. He obtained upper estimates for the Fourier coefficients of functions in $L_{2,r}$ that replace the Hausdorff--Young--Riesz theorem, and proved that for a certain class of multiplicative systems these estimates are sharp.	
	\begin{theorem}\cite{16}
		Let $\{\varphi_n\}_{n=1}^{\infty}$ be an orthonormal system of complex-valued functions on $[0,1]$ satisfying
		\[
		\|\varphi_n\|_{\infty} \leq M, \quad n=1,2,\ldots,
		\]
		and let $f \in L_{2,q}$ with $2<q \leq \infty$. Then the following inequality holds:
		\[
		\sup_{n \in \mathbb{N}}
		\frac{1}{n^{1/2} \, (\log(n+1))^{1/2 - 1/q}}
		\sum_{m=1}^n a_m^*
		\leq C \, \|f\|_{L_{2,q}},
		\]
		where $a_n$ are the Fourier coefficients with respect to the system $\{\varphi_n\}_{n=1}^{\infty}$.
		\end{theorem}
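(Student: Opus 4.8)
The plan is to pass to the dual side and reduce the estimate to a norm bound for sums of the orthonormal functions. Fix $n$ and let $E\subset\mathbb N$ be the set of $n$ indices carrying the $n$ largest values $|a_k|$, so that $\sum_{m=1}^n a_m^*=\sum_{k\in E}|a_k|$. Choosing the unimodular multipliers $c_k=\overline{a_k}/|a_k|$ and setting $g_E=\sum_{k\in E}\overline{c_k}\,\varphi_k$, one has $\sum_{k\in E}|a_k|=\int_0^1 f\,\overline{g_E}\,dx$, and by the Hölder inequality for Lorentz spaces (the associate space of $L_{2,q}$ being $L_{2,q'}$ with $1/q+1/q'=1$, $1\le q'<2$),
\[
\sum_{m=1}^n a_m^*\le C_q\,\|f\|_{L_{2,q}}\,\|g_E\|_{L_{2,q'}}.
\]
Thus everything reduces to proving $\|g_E\|_{L_{2,q'}}\le C_{q,M}\,n^{1/2}(\log(n+1))^{1/2-1/q}$, uniformly over all index sets $E$ of size $n$ and all unimodular coefficients.

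For this I would use only two features of $g_E$: orthonormality gives $\|g_E\|_{L_2}=n^{1/2}$, and the uniform bound gives $\|g_E\|_\infty\le\sum_{k\in E}\|\varphi_k\|_\infty\le Mn$. Since the rearrangement is nonincreasing, $t\,(g_E^*(t))^2\le\int_0^t(g_E^*)^2\le\|g_E\|_{L_2}^2=n$, whence the two pointwise bounds
\[
g_E^*(t)\le Mn,\qquad g_E^*(t)\le \sqrt{n/t},
\]
which cross at $t_1=1/(M^2n)$ (note that $1=\int_0^1|\varphi_k|^2\le\|\varphi_k\|_\infty^2\le M^2$ forces $M\ge1$, so $t_1\le1$).

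The crux is to extract the correct logarithmic exponent $1/2-1/q$ rather than the cruder $1-1/q$. Writing $\|g_E\|_{L_{2,q'}}^{q'}=\int_0^1 t^{q'/2-1}(g_E^*(t))^{q'}\,dt$ and splitting at $t_1$, the part over $(0,t_1)$ is handled by $g_E^*\le Mn$ and equals a constant multiple of $n^{q'/2}$. For the part over $(t_1,1)$ the temptation is to insert $g_E^*\le\sqrt{n/t}$ pointwise, but then $\int_{t_1}^1 t^{-1}\,dt$ spends the full $\log n$ and yields the exponent $1-1/q$; this is lossy, because $g_E^*$ cannot remain of size $\sqrt{n/t}$ throughout $(t_1,1)$ without violating $\|g_E\|_{L_2}^2=n$. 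Instead I would apply Hölder with the conjugate exponents $2/q'$ and $2/(2-q')$, so that the $L_2$ mass is used \emph{globally}:
\[
\int_{t_1}^1 t^{q'/2-1}(g_E^*)^{q'}\,dt\le\Big(\int_{t_1}^1 (g_E^*)^2\,dt\Big)^{q'/2}\Big(\int_{t_1}^1 \tfrac{dt}{t}\Big)^{(2-q')/2}\le n^{q'/2}\,(\log(M^2n))^{(2-q')/2}.
\]
Combining the two ranges gives $\|g_E\|_{L_{2,q'}}^{q'}\le C_q\,n^{q'/2}(1+\log(M^2n))^{(2-q')/2}$, and raising to the power $1/q'$ turns the logarithmic exponent into $(2-q')/(2q')=1/q'-1/2=1/2-1/q$, exactly as required. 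The endpoint $q=\infty$ ($q'=1$) is the same computation with Hölder reducing to Cauchy--Schwarz.

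Assembling the displays and absorbing the innocuous factor $1+\log(M^2n)\le C_M\log(n+1)$ (valid for all $n\ge1$, the finitely many small values being swallowed by the constant) yields $\sum_{m=1}^n a_m^*\le C_{q,M}\,n^{1/2}(\log(n+1))^{1/2-1/q}\|f\|_{L_{2,q}}$; dividing and taking the supremum over $n\in\mathbb N$ gives the theorem. I expect the main obstacle to be precisely the sharp logarithmic bookkeeping of the third paragraph: the naive monotone-rearrangement estimate overshoots by a factor $(\log n)^{1/2}$, and it is the Hölder splitting that exploits the $L_2$ mass globally rather than pointwise which recovers the optimal power $1/2-1/q$.
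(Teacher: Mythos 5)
Your proof is correct, but it is not the paper's route --- indeed the paper gives no proof of this statement at all: it is quoted from Bochkarev \cite{16}, and what the paper itself proves is a two-variable, grand-Lorentz generalization by entirely different means. Your argument is direct duality: $\sum_{m=1}^n a_m^*=\sup_{|E|=n}\sum_{k\in E}|a_k|$, realized as $\int_0^1 f\,\overline{g_E}\,dx$ for a unimodular-coefficient polynomial $g_E$, followed by Lorentz--H\"older, and then the two-regime bound $g_E^*(t)\le\min\{Mn,\sqrt{n/t}\}$ with the crucial H\"older split on $(1/(M^2n),1)$ that spends the $L_2$ mass globally and yields the sharp exponent $1/q'-1/2=1/2-1/q$; your diagnosis that inserting $\sqrt{n/t}$ pointwise loses a factor $(\log n)^{1/2}$ is exactly right. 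The paper's machinery is instead interpolation--extrapolation: Paley-type coefficient estimates in mixed-norm Lebesgue spaces (Lemma~\ref{Le3}), the identification $(L_{\bar 1},L_{\bar 2})_{\bar\theta,\bar q}=L_{\bar p,\bar q}$ with explicit dependence of the interpolation constant on $\bar\theta$ (Theorem~\ref{Te1}, resting on the Hardy inequality of Lemma~\ref{hardy} with constant of order $\alpha^{-\beta}$), a $K$-functional bound for the coefficient map (Lemma~\ref{Le2}), and finally extrapolation: setting $1/p_i=1/2+\varepsilon_i$, the constant blows up like $\varepsilon_i^{-\beta_i}$, and taking the supremum over $\varepsilon$ --- i.e., working in the grand Lorentz sequence space --- converts this blow-up into the logarithmic factor (Theorem~\ref{Te4} and the concluding theorem). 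Your approach buys brevity and self-containedness (nothing beyond rearrangement inequalities and H\"older); the paper's buys generality and strength: the constant tracking gives the anisotropic two-dimensional statement, the grand Lorentz versions, and the quadratic form $\bigl(\sum_{m\le n}(a_m^*)^2\bigr)^{1/2}\lesssim(\log n)^{1/2-1/q}\|f\|_{L_{2,q}}$, which implies your $\ell^1$ form by Cauchy--Schwarz but not conversely. Two small points of hygiene in your write-up: a set carrying ``the $n$ largest'' $|a_k|$ need not exist (coefficients of an $L_{2,q}$ function with $q>2$ need not tend to zero), so the first step should be phrased as a supremum over all $E$ with $|E|=n$ --- harmless, since your bound on $\|g_E\|_{L_{2,q'}}$ is uniform in $E$ --- and one should set $c_k=1$ when $a_k=0$.
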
	
	Further investigations of the behavior of Fourier coefficients of functions belonging to the space $L_{2,q}$ were carried out in the works \cite{MT13, MT15, N2000Izv}. In papers~\cite{22,23}, refinements of Bochkarev’s inequality were obtained independently of each other.\\ 
			Let $\Psi = \{\psi_{n_1}(x_1), \phi_{n_2}(x_2)\}_{n_1, n_2 \in \mathbb{N}}, \,\; (x_1,x_2) \in [0,1]^2$, be an orthonormal systems in $L_2[0,1]^2$ such that for every $n \in \mathbb{N}$ we have $|\psi_n(x_1)|\leq M_1$ and $|\psi_n(x_2)| \leq M_2$ almost everywhere.
			Let $f \in L_1{[0,1]^2}$ ,
		\[
		a_{k_1 k_2} = \int_0^1  \int_0^1 f(x_1, x_2) \overline{\psi_{k_1}(x_1)}\; \overline{\phi_{k_2}(x_2)}\, dx_1 dx_2,\;\;\;\;\;\;\; (k_1,k_2)\in \Bbb N^2
		\]
		the Fourier coefficients of a function with respect to the system~$\Psi$
		
			The decreasing rearrangement of $f$ is defined by
		$$
		f^{*}(t)=\inf \{\sigma: \mu\{x \in \Omega:|f(x)|>\sigma\} \leqslant t\}.
		$$

		$f^{*_ 1, *_2}\left(t_{1}, t_{2}\right)$ is obtained from $f(x)=f\left(x_{1}, x_{2}\right)$ by applying the nonincreasing rearrangement successively with respect to each variable $x_{1}$ and $x_{2}$, while keeping the other variable fixed. 
		where rearrangement is successively preformed with respect to each of the variables $x_{1}$ and $x_{2}$, whereas the other variable is assumed to be fixed (see \cite{N2000Izv}).
	Let $0<\bar p=(p_1,p_2)<\infty$ and $0<\bar q=(q_1,q_2)\leq\infty$.  
The anisotropic Lorentz space $L_{\bar p,\bar q}[0,1]^2$ is defined as the set of all measurable functions $f$ for which the following quantity is finite: 
		$$
		\|f\|_{L_{\bar{p}, \bar{q}}[0,1]^2}=	\left(\int_{0}^{1}\left(\int_{0}^{1}\left(t_{1}^{\frac{1}{p_{1}}} t_{2}^{\frac{1}{p_{2}}} f^{*_1, *_2}\left(t_{1}, t_{2}\right)\right)^{q_{1}} \frac{d t_{1}}{t_{1}}\right)^{\frac{q_{2}}{q_{1}}} \frac{d t_{2}}{t_{2}}\right)^{\frac{1}{q_{2}}}
		$$
	Here and throughout, expressions of the form  
$\left( \int |\phi(x)|^q\frac{dt}t\right) ^{1/q}, \; \left( \sum_k |a_k|^q\frac1k\right) ^{1/q}$ 
are understood, in the case $q=\infty$, as
$\sup_x |\phi(x)|, \; \sup_k|a_k|$
respectively.\\
Nursultanov~E.\,D. obtained Hardy–Littlewood–Stein \cite{Nursultanov2026} type inequalities for anisotropic Lorentz spaces.  
In particular, the following theorems hold:
\begin{theorem} \cite{Nursultanov2026}
Let  $0<\bar q=(q_1,q_2)\leq\infty $,   $\Psi = \{\psi_{n_1}(x_1), \phi_{n_2}(x_2)\}_{n_1, n_2 \in \mathbb{N}}$ be an orthonormal system of complex-valued functions uniformly bounded in the sense of complete uniformity.
	If  $1<\bar p=(p_1,p_2)<2$ and $f\in L_{\bar p,\bar q}[0,1]^2$,  then  $a(f)\in l^*_{\bar p',\bar q}$ and the inequality
	$$\left(\sum_{k_2 \in \mathbb{N}}\left(\sum_{k_1 \in \mathbb{N}}\left( k_2^{\frac{1}{p'_2}}k_1^{\frac1{p'_1}}a_{k_1 k_2}^{*_2, *_1}\right)^{q_1}\frac1{k_1} \right)^{\frac{q_2}{q_1}}\frac1{k_2}\right)^{\frac{1}{q_2}}\lesssim\|f\|_{L_{\bar p,\bar q}[0,1]^2},
		$$ holds.
If  $2<\bar p=(p_1,p_2)<\infty$ и $a(f)\in l^*_{\bar p,\bar q}$,  then  $f\in L_{\bar p',\bar q}[0,1]^2$ and the inequality
			$$
			\|f\|_{L_{\bar p',\bar q}[0,1]^2} \lesssim
		\left(\sum_{k_2 \in \mathbb{N}}\left(\sum_{k_1 \in \mathbb{N}}\left( k_2^{\frac{1}{p_2}}k_1^{\frac1{p_1}}a_{k_1 k_2}^{*_2, *_1}\right)^{q_1}\frac1{k_1} \right)^{\frac{q_2}{q_1}}\frac1{k_2}\right)^{\frac{1}{q_2}},
	$$ holds.
is valid, where $a(f)=\{a_{k_1 k_2}\}$ denotes the system of Fourier coefficients of $f$ with respect to~$\Psi$.
\end{theorem}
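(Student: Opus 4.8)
The plan is to deduce the two--dimensional statement from the one--dimensional Stein inequality quoted above by iterating the latter once in each variable, exploiting the tensor--product structure of $\Psi$. Because the kernel $\psi_{k_1}(x_1)\phi_{k_2}(x_2)$ factors, the coefficient map $T\colon f\mapsto a(f)$ splits as $T=T_2\circ T_1$, where
\[
\begin{gathered}
T_1\colon f(x_1,x_2)\mapsto b_{k_1}(x_2):=\int_0^1 f(x_1,x_2)\,\overline{\psi_{k_1}(x_1)}\,dx_1,\\
T_2\colon b_{k_1}(x_2)\mapsto a_{k_1k_2}=\int_0^1 b_{k_1}(x_2)\,\overline{\phi_{k_2}(x_2)}\,dx_2 .
\end{gathered}
\]
Both anisotropic quasinorms that occur are themselves built by iterating one--dimensional Lorentz norms after successive rearrangement in each variable, so the factorization of $T$ matches the layered structure of the norms; the whole difficulty is to make the two one--dimensional applications interlock.

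For part~(1) assume $1<p_1,p_2<2$ and use the uniform bounds $|\psi_n|\le M_1$, $|\phi_n|\le M_2$ guaranteed by the complete uniformity of $\Psi$. Freezing $x_2$ and applying the case $1<p<2$ of Stein's inequality to the slice $f(\cdot,x_2)$ against $\{\psi_{k_1}\}$ gives
\[
\Big(\sum_{k_1}\big(k_1^{1/p_1'}\,b_{k_1}^{*_1}(x_2)\big)^{q_1}\frac{1}{k_1}\Big)^{1/q_1}
\le C\,\|f(\cdot,x_2)\|_{L_{p_1,q_1}[0,1]}\qquad\text{for a.e. }x_2 .
\]
Independently, for each fixed $k_1$ the same one--dimensional inequality applied to $b_{k_1}(\cdot)$ against $\{\phi_{k_2}\}$ controls the rearrangement of $a_{k_1k_2}$ in $k_2$ with the weight $k_2^{1/p_2'}$ by $\|b_{k_1}\|_{L_{p_2,q_2}[0,1]}$. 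These are the two estimates to be combined into the product weight $k_2^{1/p_2'}k_1^{1/p_1'}$ on the left and the full norm $\|f\|_{L_{\bar p,\bar q}}$ on the right.

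The main obstacle is not either one--dimensional estimate but the interchange of the two rearrangements with the two Lorentz norms. The target quantity is $a^{*_2,*_1}_{k_1k_2}$, the array rearranged successively in the two indices, and these rearrangements commute neither with the summation in the complementary index nor with the passage from slice norms to the mixed norm; note also that no ordering such as $q_1\le q_2$ is assumed, so a naive Minkowski interchange is unavailable. What is needed is a \emph{slicing lemma}: the iterated--rearrangement sequence norm $\|\cdot\|_{l^*_{\bar p',\bar q}}$ is equivalent, with constants depending only on $\bar p,\bar q$, to the iteration of the two one--dimensional rearrangement norms performed in the order dictated by $T=T_2\circ T_1$. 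This is, on both the function and the sequence side, the analogue of the fact that the successive rearrangement $f^{*_1,*_2}$ governs the anisotropic Lorentz norm; I would prove it from the layer--cake description of the decreasing rearrangement in one variable with the other held fixed, using Hardy's inequality to absorb the power weights. I expect this equivalence, rather than either one--dimensional inequality, to be the technical heart of the argument.

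For part~(2), with $2<p_1,p_2<\infty$ and $a(f)\in l^*_{\bar p,\bar q}$, I would run the same scheme with the sufficiency half (case $2<p<\infty$) of Stein's inequality, reconstructing $f$ one variable at a time so that its one--dimensional Lorentz slice norms are dominated by the corresponding discrete norms, and then reassembling via the same slicing lemma to bound $\|f\|_{L_{\bar p',\bar q}}$. Alternatively, since the two halves of Stein's theorem are mutually dual and the anisotropic Lorentz spaces form an associate pair under the iterated pairing, part~(2) follows from part~(1) by duality with $\bar p$ and $\bar p'$ interchanged, modulo the usual care required for duality of quasi--Banach Lorentz spaces; the agreement of the two routes serves as a consistency check.
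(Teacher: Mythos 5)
First, a point of comparison: the paper does not prove this theorem at all --- it is quoted as background from Nursultanov's book \cite{Nursultanov2026} --- so your attempt can only be measured against the method that source uses and that this paper replicates for its own main results (the case $\bar p=\bar 2$), namely the anisotropic interpolation method.

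Your proposal has a genuine gap, and you have located it yourself: everything rests on the ``slicing lemma,'' which you never prove, and which is not available in the generality required. The obstruction is concrete. After the first one-dimensional application you control, for a.e.\ fixed $x_2$, the decreasing rearrangement of $k_1\mapsto b_{k_1}(x_2)$, and the rearranging permutation depends on $x_2$. The target quantity $a^{*_2,*_1}_{k_1k_2}$, however, rearranges in $k_1$ only \emph{after} the integration in $x_2$ has been carried out by $T_2$ and after the rearrangement in $k_2$; rearrangement in one index does not commute with integration or summation in the other, and what survives are only one-sided inequalities whose direction depends on the relative size of the exponents --- exactly as in Lemma~\ref{mink}, where the inequality reverses according to whether $p\le q$ or $q\le p$. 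Since no ordering of $q_1,q_2$ is assumed (and $q_i<1$ is allowed), neither a Minkowski interchange nor such a rearrangement inequality can be invoked in the needed direction, so the two one-dimensional estimates cannot be interlocked; the claimed two-sided equivalence between the iterated-rearrangement norm and the iteration of one-dimensional norms is precisely what fails in the anisotropic theory. The duality fallback for part~(2) breaks down for the same structural reason, compounded by the quasi-Banach range $0<\bar q\le\infty$, where anisotropic Lorentz spaces built on iterated rearrangements do not form a clean associate pair. The proof that actually works treats the two variables simultaneously rather than successively: one establishes four endpoint estimates for coefficient sums over rectangles in the mixed-norm spaces $L_{(1,1)}$, $L_{(2,1)}$, $L_{(1,2)}$, $L_{(2,2)}$ (as in Lemma~\ref{Le3}), deduces from the arbitrariness of the decomposition $f=f_{00}+f_{10}+f_{01}+f_{11}$ a bound by the two-parameter Peetre $K$-functional (as in Lemma~\ref{Le2}), and then identifies the anisotropic interpolation space $(L_{\bar 1},L_{\bar 2})_{\bar\theta,\bar q}=L_{\bar p,\bar q}$ (Theorem~\ref{Te1}, Lemma~\ref{Le0}). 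Your tensorization-plus-slicing scheme, as written, cannot be closed.
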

In the present paper, we investigate the summability properties of Fourier coefficients of functions belonging to the anisotropic Lorentz space.  
A key tool in our analysis is provided by the grand Lorentz spaces introduced in \cite{NursultanovRafeiroSuragan2025}.\\

Throughout this paper, the notation \( A \lesssim B \) means that $A \le c\, B ,$ for some constant \( c>0 \) independent of the essential parameters of the ambient space. The notation \( A \asymp B \) signifies a two--sided estimate, that is, both \( A \lesssim B \) and \( B \lesssim A \) hold. In the definitions below, we use the symbols
$\bar{p} = (p_1, p_2), \qquad \bar{q} = (q_1, q_2) \in \mathbb{R}_{+}^{2},
$ with all parameters considered in this generality. If 
$0 < p_1, p_2 \le \infty, \qquad \frac{1}{p_1} + \frac{1}{p_1'} = 1, \qquad
\frac{1}{p_2} + \frac{1}{p_2'} = 1,$ then we write \( \bar{0} < \bar{p} \le \bar{\infty} \) and $ \frac{1}{\bar{p}} + \frac{1}{\bar{p}'} = \bar{1}.
$ Moreover, if \( p_1 \le q_1 \) and \( p_2 \le q_2 \), we write \( \bar{p} \le \bar{q} \).
\section{Anisotropic Grand Lorentz Function Spaces}
\begin{definition} \cite{Manarbek}
Let $\bar{\theta} = (\theta_1, \theta_2)$, $-\infty < \bar{\theta} < \infty$, 
$\bar{p} = (p_1, p_2)$, $0 < \bar{p} \le \infty$, 
$\bar{q} = (q_1, q_2)$, $0 < \bar{q} \le \infty$.
The \emph{anisotropic grand Lorentz space} 
$G^{\bar{\theta}}L_{\bar{p},\bar{q}}[0,1]^2$ 
is defined as the set of all measurable functions $f$ for which the following quasi-norms are finite.  \\
For $0 < \bar q < \infty$:
\[
\|f\|_{G^{\bar{\theta}}L_{\bar{p},\bar{q}}[0,1]^2} =
\begin{cases}
\displaystyle
\sup_{0 < \varepsilon_1, \varepsilon_2 \le 1}
\varepsilon_1^{\theta_1} \varepsilon_2^{\theta_2}
\left(
  \int_0^1
  \left(
    \int_0^1
    \left(
      t_1^{\frac{1}{p_1} + \varepsilon_1}
      t_2^{\frac{1}{p_2} + \varepsilon_2}
      f^{*1,*2}(t_1,t_2)
    \right)^{q_1}
    \frac{dt_1}{t_1}
  \right)^{\frac{q_2}{q_1}}
  \frac{dt_2}{t_2}
\right)^{\frac{1}{q_2}}, \\
\text{for } \theta_1 \ge 0, \ \theta_2 \ge 0, 0<p_i\leq\infty, i=1,2, \\[2ex]
\displaystyle
\inf_{\substack{
0 < \varepsilon_1 \le \frac{1}{p_1} \\
0 < \varepsilon_2 \le \frac{1}{p_2}}}
\varepsilon_1^{\theta_1} \varepsilon_2^{\theta_2}
\left(
  \int_0^1
  \left(
    \int_0^1
    \left(
      t_1^{\frac{1}{p_1} - \varepsilon_1}
      t_2^{\frac{1}{p_2} - \varepsilon_2}
      f^{*1,*2}(t_1,t_2)
    \right)^{q_1}
    \frac{dt_1}{t_1}
  \right)^{\frac{q_2}{q_1}}
  \frac{dt_2}{t_2}
\right)^{\frac{1}{q_2}},\\
\text{for } \theta_1 < 0, \ \theta_2 < 0, 0<p_i\leq\infty, i=1,2.
\end{cases}\]\\
\begin{remark}
 Unlike the classical anisotropic Lorentz spaces  $L_{\bar{p}, \bar{q}}[0,1]^{2}$ the grand anisotropic Lorentz spaces $G^{\bar{\theta}}L_{\bar{p},\bar{q}}[0,1]^2$  have meaning in cases $p_i=\infty, \theta_i>0$.\\
\end{remark}
\begin{remark}
These spaces were introduced and their properties studied in the works \cite{NursultanovRafeiroSuragan2025}--\cite{Manarbek}.  In case when $\bar\theta=(0,0)$, we have ${G^{\bar{\theta}}L_{\bar{p},\bar{q}}[0,1]^2}=L_{\bar p,\bar q}[0,1]^2$.  Hence, the anisotropic grand Lorentz space includes in it scales the anisotropic Lorentz space, and the parameter $\bar\theta=(\theta_ 1, \theta_2)$ makes the associated scale of grand spaces more flexible.
\end{remark}
 The grand anisotropic Lorentz spaces allow us to consider the scale of spaces ${G^{\bar{\theta}}L_{\bar{p},\bar{q}}[0,1]^2}$. For $\bar{\theta}>\overline{0}$ the following property holds:
\begin{equation}
		{G^{-\bar{\theta}}L_{\bar{p},\bar{q}}[0,1]^2}. \hookrightarrow L_{\bar{p}, \bar{q}}[0,1]^2 \hookrightarrow {G^{\bar{\theta}}L_{\bar{p},\bar{q}}[0,1]^2}. 
\end{equation}
\begin{lemma}\label{L1} \cite{Manarbek}
	Let  $0<p_{1}, p_{2}<\infty$. Then for $\theta_{1}, \theta_{2}>0$
		$$
		\|f\|_{G^{\bar{\theta}}L_{\bar{p},\bar{\infty}}[0,1]^2} \asymp \sup _{t_{1}, t_{2}>0} \frac{t_{1}^{\frac{1}{p_{1}}} \cdot t_{2}^{\frac{1}{p_{2}}}}{\left|\ln t_{1}\right|^{\theta_{1}}\left|\ln t_{2}\right|^{\theta_{2}}} f^{*_{1}, *_{2}}\left(t_{1}, t_{2}\right)
		$$
		$$
		\begin{aligned}
			\|f\|_{G^{\bar{\theta}}L_{\bar{p},\bar{q}}[0,1]^2} & \lesssim\left(\int_{0}^{1}\left(\int_{0}^{1}\left(\frac{t_{1}^{\frac{1}{p_{1}}} \cdot t_{2}^{\frac{1}{p_{2}}}}{\left|\ln t_{1}\right|^{\theta_{1}}\left|\ln t_{2}\right|^{\theta_{2}}} f^{*_{1}, *_{2}}\left(t_{1}, t_{2}\right)\right)^{q_{1}} \frac{d t_{1}}{t_{1}}\right)^{\frac{q_{2}}{q_{1}}} \frac{d t_{2}}{t_{2}}\right)^{q_{2}} \\
			\|f\|_{G^{-\bar{\theta}}L_{\bar{p},\bar{q}}[0,1]^2}& \gtrsim\left(\int_{0}^{1}\left(\int_{0}^{1}\left(t_{1}^{\frac{1}{p_{1}}}\left|\ln t_{1}\right|^{\theta_{1}} t_{2}^{\frac{1}{p_{2}}}\left|\ln t_{2}\right|^{\theta_{2}} f^{*_ 1, *_ 2}\left(t_{1}, t_{2}\right)\right)^{q_{1}} \frac{d t_{1}}{t_{1}}\right)^{\frac{q_{2}}{q_{1}}} \frac{d t_{2}}{t_{2}}\right)^{q_{2}}
		\end{aligned}
		$$
	\end{lemma}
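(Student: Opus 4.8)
The plan is to reduce each assertion to a one--dimensional extremal problem in the auxiliary parameter and to resolve that problem by elementary calculus. For the $\bar q=\bar\infty$ equivalence I would begin from the definition of $G^{\bar\theta}L_{\bar p,\bar\infty}[0,1]^2$, in which the inner double integral degenerates into a supremum, so that
$$
\|f\|_{G^{\bar\theta}L_{\bar p,\bar\infty}[0,1]^2}
=\sup_{0<\varepsilon_1,\varepsilon_2\le1}\ \sup_{t_1,t_2\in(0,1)}
\varepsilon_1^{\theta_1}\varepsilon_2^{\theta_2}\,
t_1^{\frac1{p_1}+\varepsilon_1}t_2^{\frac1{p_2}+\varepsilon_2}\,
f^{*_1,*_2}(t_1,t_2).
$$
Because the factor multiplying $f^{*_1,*_2}$ splits as a product over the two coordinates, I would interchange the suprema and, for fixed $(t_1,t_2)$, separate the two scalar optima $\Phi_{\theta_i}(t_i):=\sup_{0<\varepsilon\le1}\varepsilon^{\theta_i}t_i^{\varepsilon}$. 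The entire lemma then rests on the single pointwise estimate $\Phi_\theta(t)\asymp|\ln t|^{-\theta}$.

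To establish it I would maximize $\varepsilon^{\theta}t^{\varepsilon}=\varepsilon^{\theta}e^{-\varepsilon|\ln t|}$: the logarithmic derivative $\theta/\varepsilon-|\ln t|$ vanishes at $\varepsilon^{\ast}=\theta/|\ln t|$, where the value equals $\theta^{\theta}e^{-\theta}|\ln t|^{-\theta}$. When $|\ln t|\ge\theta$ this critical point is admissible and $\Phi_\theta(t)=\theta^{\theta}e^{-\theta}|\ln t|^{-\theta}$; when $|\ln t|<\theta$ the constrained supremum sits at the endpoint $\varepsilon=1$ and $\Phi_\theta(t)=t$. The uniform bound $\Phi_\theta(t)\le\theta^{\theta}|\ln t|^{-\theta}$ then gives the $\lesssim$ half of the equivalence; pulling the constant $\varepsilon_1^{\theta_1}\varepsilon_2^{\theta_2}$ inside the mixed quasi-norm (by its homogeneity in constant factors) and invoking the same pointwise bound delivers, verbatim, the first displayed inequality for arbitrary $\bar q$. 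For the reverse $\gtrsim$ half I would, at each fixed $(t_1,t_2)$, test the defining supremum with the admissible choice $\varepsilon_i=\min\{1,\theta_i/|\ln t_i|\}$, which reproduces the weight $|\ln t_i|^{-\theta_i}$ up to the fixed factor $\theta_i^{\theta_i}e^{-\theta_i}$. The $G^{-\bar\theta}$ inequality is handled by the mirror-image computation: one minimizes $\varepsilon^{-\theta}e^{\varepsilon|\ln t|}$ over $0<\varepsilon\le1/p$, the same critical point $\varepsilon^{\ast}=\theta/|\ln t|$ is now a minimum, and the resulting pointwise lower bound $\inf_{0<\varepsilon\le1/p}\varepsilon^{-\theta}t^{-\varepsilon}\gtrsim|\ln t|^{\theta}$, inserted under the mixed integral, yields the stated $\gtrsim$.

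The delicate point, and the one I expect to be the main obstacle, is the endpoint regime $t\to1$, where the constraint $\varepsilon\le1$ (respectively $\varepsilon\le1/p$) becomes active. There $\Phi_\theta(t)=t$ stays bounded while $|\ln t|^{-\theta}$ blows up, so the clean equivalence degrades: the genuine weight produced by the optimization is $\Phi_\theta(t)\asymp(1+|\ln t|)^{-\theta}$, which coincides with $|\ln t|^{-\theta}$ only on regions bounded away from $t=1$. I would therefore read $|\ln t|^{-\theta}$ throughout as this regularized weight $(1+|\ln t|)^{-\theta}$; the two are comparable precisely on the small-$t$ range that governs the grand scale, and the monotonicity of $f^{*_1,*_2}$ in each variable is the tool I would use to localize the extremal behavior to that range, so that the substitution costs only an absolute constant.
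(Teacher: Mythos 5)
The paper itself contains no proof of this lemma --- it is imported verbatim from \cite{Manarbek} --- so there is no in-paper argument to compare you against; what follows judges your proposal on its own merits. Your reduction to the two scalar extremal problems is the right idea, and for three of the four assertions it works even more cleanly than you claim, because the relevant pointwise bounds hold for \emph{all} $\varepsilon>0$: for $t\in(0,1)$ one has
\[
\sup_{\varepsilon>0}\varepsilon^{\theta}t^{\varepsilon}
=\Bigl(\tfrac{\theta}{e}\Bigr)^{\theta}\,|\ln t|^{-\theta},
\qquad
\inf_{\varepsilon>0}\varepsilon^{-\theta}t^{-\varepsilon}
=\Bigl(\tfrac{e}{\theta}\Bigr)^{\theta}\,|\ln t|^{\theta},
\]
so the constraints $\varepsilon\le1$ and $\varepsilon\le1/p_i$ are never active in these directions. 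Feeding these pointwise bounds into the mixed quasi-norm by lattice monotonicity, exactly as you describe, yields the $\lesssim$ half of the first display, the entire second inequality, and the entire third inequality, with the literal weights $|\ln t_i|$ and constants depending only on $\bar\theta$; your factorization of the supremum and the interchange $\sup_{\varepsilon}\sup_{t}=\sup_{t}\sup_{\varepsilon}$ are also legitimate.

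The genuine gap is the final claim of your last paragraph, which concerns the only nontrivial assertion left: the $\gtrsim$ half of the $\bar q=\bar\infty$ equivalence. You correctly compute that the constrained supremum produces the regularized weight, $\sup_{0<\varepsilon\le1}\varepsilon^{\theta}t^{\varepsilon}\asymp_{\theta}(1+|\ln t|)^{-\theta}$, but your proposed repair --- that monotonicity of $f^{*_1,*_2}$ localizes the extremum to small $t$ so that replacing $(1+|\ln t_i|)$ by $|\ln t_i|$ ``costs only an absolute constant'' --- is false, and no argument can make it work. Take $f\equiv1$, so that $f^{*_1,*_2}=\chi_{(0,1)^2}$. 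Then
\[
\|f\|_{G^{\bar{\theta}}L_{\bar{p},\bar{\infty}}[0,1]^2}
=\sup_{0<\varepsilon_1,\varepsilon_2\le1}\varepsilon_1^{\theta_1}\varepsilon_2^{\theta_2}=1,
\qquad\text{while}\qquad
\sup_{t_1,t_2\in(0,1)}\frac{t_1^{1/p_1}t_2^{1/p_2}}{|\ln t_1|^{\theta_1}|\ln t_2|^{\theta_2}}\,f^{*_1,*_2}(t_1,t_2)=\infty,
\]
since the weight blows up as $t_i\to1^{-}$ while $f^{*_1,*_2}$ stays equal to $1$ there. So with the literal weights the stated equivalence fails at the endpoint, whereas with the regularized weights $(1+|\ln t_i|)^{-\theta_i}$ (equivalently $(\ln(e/t_i))^{-\theta_i}$) your optimization argument, including the test choice $\varepsilon_i=\min\{1,\theta_i/|\ln t_i|\}$, is already a complete and correct proof. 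The right resolution is to state the first display with the regularized weights (presumably the convention intended in \cite{Manarbek}) and delete the localization claim, rather than attempt to prove it.
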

From the definition of $G^{\bar{\theta}}L_{\bar{p},\bar{q}}[0,1]^2$ and well-known results on Lorentz spaces it is easy to prove the following properties \cite{Manarbek}:\\
	(P.1) if $\bar{\theta} \leqslant \bar{s}$, then $G^{\bar{\theta}}L_{\bar{p},\bar{q}}[0,1]^2\hookrightarrow G^{\bar{s}}L_{\bar{p},\bar{q}}[0,1]^2$,\\
	(P.2) if $\bar{q}<\bar{r}$, then $G^{\bar{\theta}}L_{\bar{p},\bar{q}}[0,1]^2\hookrightarrow G^{\bar{\theta}}L_{\bar{p},\bar{r}}[0,1]^2$,\\
    (P.3) if $\bar p<\bar p_1, 0<\bar q,\bar q_1 \leq\infty$, then $G^{\bar{\theta}}L_{\bar{p_1},\bar{q_1}}[0,1]^2\hookrightarrow G^{\bar{\theta}}L_{\bar{p},\bar{q}}[0,1]^2$,\\
	(P.4) if $\overline{0}<\bar{\delta}<\overline{1}$ and $\bar{\theta}>\overline{0}$, then
	$$
	\|f\|_{G^{\bar{\theta}}L_{\bar{p},\bar{q}}[0,1]^2} \asymp \sup _{\substack{0<\varepsilon_{1} \leqslant \delta_{1} \\ 0<\varepsilon_{2} \leqslant \delta_{2}}} \varepsilon_{1}^{\theta_{1}} \varepsilon_{2}^{\theta_{2}}\left(\int_{0}^{1}\left(\int_{0}^{1}\left(t_{1}^{\frac{1}{p_{1}}+\varepsilon_{1}} t_{2}^{\frac{1}{p_{2}}+\varepsilon_{2}} f^{*_ 1, *_ 2}\left(t_{1}, t_{2}\right)\right)^{q_{1}} \frac{d t_{1}}{t_{1}}\right)^{\frac{q_{2}}{q_{1}}} \frac{d t_{2}}{t_{2}}\right)^{q_{2}}
	$$
	
	and, if $0<\bar{\delta}<1 / \bar{p}$,
	
	$$
	\|f\|_{G^{-\bar{\theta}}L_{\bar{p},\bar{q}}[0,1]^2} \asymp \inf _{\substack{0<\varepsilon_{1}<\delta_{1} \\ 0<\varepsilon_{2}<\delta_{2}}} \varepsilon_{1}^{-\theta_{1}} \varepsilon_{2}^{-\theta_{2}}\left(\int_{0}^{1}\left(\int_{0}^{1}\left(t_{1}^{\frac{1}{p_{1}}-\varepsilon_{1}} t_{2}^{\frac{1}{p_{2}}-\varepsilon_{2}} f^{*_{1}, *_{2}}\left(t_{1}, t_{2}\right)\right)^{q_{1}} \frac{d t_{1}}{t_{1}}\right)^{\frac{q_{2}}{q_{1}}} \frac{d t_{2}}{t_{2}}\right)^{q_{2}}
	$$
	(P.5) Let  $0<\bar p\leq\infty, \bar\theta<\bar\lambda$ and $\bar\theta-\frac{1}{\bar q}$ then the following embedding holds: $G^{\bar{\theta}} L_{{\bar{p}}, \bar{\tau}}[0,1]^2 \hookrightarrow G^{\bar{\lambda}}L_{{\bar{p}},\bar{\tau}}[0,1]^2$,\\
	(P.6) if $0<\bar{p}<\infty, 0<\bar{\tau} \leqslant \infty, \bar{\theta}>0$, then
	$$
	\begin{aligned}
		\|f\|_{G^{\bar{\theta}} L {\bar{p}}, \bar{\tau}} & =\sup _{1<k_{1}, k_{2}} k_{1}^{-\theta_{1}} k_{2}^{-\theta_{2}} \\
		& \times\left(\sum_{m_{1}=-\infty}^{0}\left(\sum_{m_{2}=-\infty}^{0}\left(2^{m_{1}\left(\frac{1}{p_{1}}+\frac{1}{k_{1}}\right)} 2^{m_{2}\left(\frac{1}{p_{2}}+\frac{1}{k_{2}}\right)} f^{*_1,*_2}\left(2^{m_{1}}, 2^{m_{2}}\right)\right)^{\tau_{1}}\right)^{\frac{\tau_{2}}{\tau_{1}}}\right)^{\frac{1}{\tau_{2}}}
	\end{aligned}
	$$
\end{definition}	
We will need a certain discrete version of the grand Lorentz space $G^{\bar{\theta}}l^*_{\bar{p},\bar{q}}[0,1]^2$.\\
Let $0\le \bar\theta=(\theta_1,\theta_2)$, $0<\bar q=(q_1,q_2)\le\infty$, and  
$2\le \bar p=(p_1,p_2)\le\infty$.  
By $G^{\bar{\theta}}l^*_{\bar{p},\bar{q}}[0,1]$ we denote the space of all sequences of complex numbers  
$a=\{a_m\}_{m\in\mathbb{Z}^2}$ for which the quantity
\begin{equation}\label{e8}
\|a\|_{G^{\bar\theta}\ell^{*}_{\bar p,\bar q}}
=\sup_{0 < \varepsilon_1, \varepsilon_2 \le 1}
\varepsilon_1^{\theta_1} \varepsilon_2^{\theta_2}
\left(
\sum_{k_2=0}^{\infty}
\left(
\sum_{k_1=0}^{\infty}
\left(
2^{k_1(\frac{1}{p_1}+\varepsilon_1)+k_2(\frac{1}{p_2}+\varepsilon_2)}
\left[
\frac{1}{2^{k_1+k_2}}
\sum_{m_2=1}^{2^{k_2}}
\sum_{m_1=1}^{2^{k_1}}
\left(a^{*_{2},*_{1}}_{m_1 m_2}\right)^2
\right]^{1/2}
\right)^{q_1}
\right)^{\frac{q_2}{q_1}}
\right)^{\frac{1}{q_2}}
\end{equation}
is finite.

\begin{remark}
Note that the iterated nonincreasing rearrangement of the sequence  
$a=\{a_m\}_{m\in\mathbb{Z}^2}$ is taken in the reverse order with respect to the order of summation in~\eqref{e8}.
\end{remark}

\section{Some inequalities} 
\begin{lemma}[Karamata-type Inequality]\cite[p. 10]{bookNursultanov} \label{karamato} 
Let  $f(x), g(x)$ be nonnegative, nonincreasing functions
	n $(0,\infty)$, and suppose that for every $t>0$,
		$$
		\int\limits_{0}^t f(x)dx\geq \int\limits_{0}^t g(x)dx,
		$$
		$$
		\int\limits_{0}^\infty f(x)dx= \int\limits_{0}^\infty g(x)dx,
		$$
and moreover,\\
\textbf{(a)} if $p\ge 1$, we have
$$
\int_{0}^{\infty} |f(x)|^{p}\,dx \;\ge\; \int_{0}^{\infty} |g(x)|^{p}\,dx;
$$

\textbf{(b)} if $p\le 1$, we have
$$
\int_{0}^{\infty} |f(x)|^{p}\,dx \;\le\; \int_{0}^{\infty} |g(x)|^{p}\,dx.
$$
\end{lemma}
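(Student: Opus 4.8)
The plan is to reduce both inequalities to a single one‑parameter ``truncated mass'' estimate and then to integrate that estimate against a nonnegative measure supplied by the convexity (resp.\ concavity) of $u\mapsto u^{p}$. Write $F(t)=\int_0^t f$ and $G(t)=\int_0^t g$, so the hypotheses read $F(t)\ge G(t)$ for all $t>0$ and $F(\infty)=G(\infty)$.

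First I would establish the key level estimate: for every $s>0$,
\[
\int_0^\infty \bigl(f(x)-s\bigr)_+\,dx \;\ge\; \int_0^\infty \bigl(g(x)-s\bigr)_+\,dx .
\]
Since $f$ is nonincreasing, the function $t\mapsto \int_0^t (f-s)=F(t)-st$ is concave and attains its maximum at the level where $f$ crosses $s$; hence $\int_0^\infty (f-s)_+\,dx=\sup_{t>0}\bigl(F(t)-st\bigr)$, and likewise for $g$. From $F(t)\ge G(t)$ we get $F(t)-st\ge G(t)-st$, and taking suprema gives the displayed inequality. Combining it with the equal‑mass condition $F(\infty)=G(\infty)$ and the pointwise identity $\min(u,s)=u-(u-s)_+$ yields the dual estimate
\[
\int_0^\infty \min\bigl(f(x),s\bigr)\,dx \;\le\; \int_0^\infty \min\bigl(g(x),s\bigr)\,dx ,\qquad s>0 .
\]

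Next I would feed these into the elementary integral representations of the power. For $p>1$ the map $u\mapsto u^{p}$ is convex and $u^{p}=\int_0^\infty (u-s)_+\,p(p-1)s^{p-2}\,ds$ against the nonnegative measure $p(p-1)s^{p-2}\,ds$; for $0<p<1$ it is concave and $u^{p}=\int_0^\infty \min(u,s)\,p(1-p)s^{p-2}\,ds$, again against a nonnegative measure. Substituting $u=f(x)$ (resp.\ $u=g(x)$), integrating in $x$, and interchanging the order of integration by Tonelli's theorem (all integrands are nonnegative), case (a) follows from $\int(f-s)_+\ge\int(g-s)_+$ and case (b) from $\int\min(f,s)\le\int\min(g,s)$; the borderline $p=1$ is precisely the equal‑mass hypothesis.

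The genuinely routine parts are the two representations of $u^{p}$ and the Tonelli interchange; the conceptual core is the reduction to the level‑set inequality $\int(f-s)_+\ge\int(g-s)_+$, which is where the majorization hypothesis $F\ge G$ is actually used. The main point to watch is integrability: one should work under $\int_0^\infty f=\int_0^\infty g<\infty$ (as in the intended application, where $f$ and $g$ are rearrangements of functions on $[0,1]$ and so are supported on a finite interval), or argue first on $[0,T]$ and pass to the limit, so that the cancellation $\int f=\int g$ used in $\min(u,s)=u-(u-s)_+$ is legitimate and the concave case is restricted to the range $0<p\le 1$ where $u^{p}$ is actually concave.
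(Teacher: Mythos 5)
Your proof is correct, but there is nothing in the paper to compare it against: the paper states this lemma with a citation to \cite[p.~10]{bookNursultanov} and gives no proof of its own. Your route --- first the level estimate $\int_0^\infty (f-s)_+\,dx \ge \int_0^\infty (g-s)_+\,dx$, obtained from the Legendre-type identity $\int_0^\infty (f-s)_+\,dx=\sup_{t>0}\bigl(F(t)-st\bigr)$ (valid because $F$ is concave when $f$ is nonincreasing, so the supremum is attained where $f$ crosses $s$), then the dual estimate $\int_0^\infty \min(f,s)\,dx \le \int_0^\infty \min(g,s)\,dx$ via $\min(u,s)=u-(u-s)_+$ and equal masses, and finally integration against the layer-cake representations $u^p=p(p-1)\int_0^\infty (u-s)_+\,s^{p-2}\,ds$ for $p>1$ and $u^p=p(1-p)\int_0^\infty \min(u,s)\,s^{p-2}\,ds$ for $0<p<1$ --- is sound; both representations check out, Tonelli applies since all integrands are nonnegative, and $p=1$ reduces to the equal-mass hypothesis. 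This is a genuinely different argument from the classical Hardy--Littlewood--P\'olya/Karamata proof (the likely content of the cited source), which applies the convexity inequality $\phi(f)-\phi(g)\ge \phi'(g)\,(f-g)$ pointwise and then integrates by parts against $F-G\ge 0$ with vanishing boundary terms. Your version treats the convex and concave cases symmetrically and avoids manipulating $\phi'$, at the cost of needing the two truncation identities.

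Two refinements would tighten it. First, your integrability caveat is the right instinct, but the suggested fix of ``arguing on $[0,T]$ and passing to the limit'' is problematic, since truncation destroys the equal-mass hypothesis; it is cleaner to observe that when $\int_0^\infty f=\int_0^\infty g=\infty$ part (b) is vacuous, because any nonnegative nonincreasing $g$ with infinite mass has $\int_0^\infty g^p=\infty$ for $0<p\le 1$ (either $g$ stays bounded away from zero, or $g\to 0$ and then $g^p\ge g$ on the set where $g\le 1$), so the only substantive case is the finite-mass one where your subtraction $\int \min(f,s)=\int f-\int(f-s)_+$ is legitimate. Second, it is worth noting explicitly that part (a) uses only the majorization $F\ge G$ and never the equal-mass condition, which your own argument already shows.
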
	
			
\begin{lemma} \label{mink}
		
	Let $0< p\le q\le\infty$, then:
		
		{\bf a)}
		$$
		\left ( \int_0^1
		\left(\int_0^1  | f(x_1,x_2)|^q dx_2
		\right)^{p/q}
		dx_1 \right)^{1/p} \geq
		\left ( \int_0^1
		\left (\int_0^1  (f^{*_1}(t_1,x_2))^q dx_2
		\right )^{p/q}
		dx_1   \right )^{1/p}.
		$$
		{\bf b)}
		$$
		\left ( \int_0^1
		\left (\int_0^1  | f(x_1,x_2)|^p dx_2
		\right )^{q/p}
		dx_1   \right )^{1/q} \leq
		\left ( \int_0^1
		\left (\int_0^1  (f^{*_1}(t_1,x_2))^p dx_2
		\right )^{q/p}
		dx_1     \right )^{1/q}.
		$$
	\end{lemma}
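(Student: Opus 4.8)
The plan is to reduce both inequalities to a single majorization statement about partial integrals and then feed it into the Karamata-type inequality (Lemma~\ref{karamato}). I would treat the two parts uniformly by writing $r$ for the inner exponent and $s$ for the outer one, so that part (a) is the case $(s,r)=(p,q)$ with $s\le r$, while part (b) is the case $(s,r)=(q,p)$ with $s\ge r$. For each fixed $x_2$ put $g_{x_2}(x_1)=|f(x_1,x_2)|$, so that $f^{*_1}(\cdot,x_2)=g_{x_2}^{*}$, and define the inner $r$-th powers
$$
P(x_1)=\int_0^1 |f(x_1,x_2)|^{r}\,dx_2,\qquad
Q(t_1)=\int_0^1 |f^{*_1}(t_1,x_2)|^{r}\,dx_2 .
$$
Since for each $x_2$ the slice $f^{*_1}(t_1,x_2)$ is nonincreasing in $t_1$, the function $Q$ is nonincreasing; and because rearrangement in $x_1$ preserves the distribution of each slice, $\int_0^1 P=\int_0^1 Q$. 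As raising to the power $s/r$ and integrating in $x_1$ is invariant under rearrangement, the left- and right-hand sides of the asserted inequality are, respectively, $\bigl(\int_0^1 (P^{*})^{s/r}\bigr)^{1/s}$ and $\bigl(\int_0^1 Q^{s/r}\bigr)^{1/s}$, where $P^{*}$ denotes the nonincreasing rearrangement of $P$ on $[0,1]$.

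The key step is the partial-integral majorization
$$
\int_0^t Q(\tau)\,d\tau \;\ge\; \int_0^t P^{*}(\tau)\,d\tau\qquad (0<t\le 1),
$$
with equality at $t=1$. To establish it I would use the Hardy--Littlewood maximal identity $\int_0^t h^{*}=\sup_{|E|=t}\int_E h$ together with the commutation $(g_{x_2}^{*})^{r}=(g_{x_2}^{r})^{*}$. Applying these slicewise and then Tonelli gives
$$
\int_0^t Q=\int_0^1\Bigl(\sup_{|E_{x_2}|=t}\int_{E_{x_2}}|f(x_1,x_2)|^{r}\,dx_1\Bigr)dx_2,
$$
whereas the same maximal identity applied to $P$ forces a single common level set,
$$
\int_0^t P^{*}=\sup_{|E|=t}\int_0^1\Bigl(\int_{E}|f(x_1,x_2)|^{r}\,dx_1\Bigr)dx_2 .
$$
Choosing the concentration set $E_{x_2}$ independently for each $x_2$ can only increase the value relative to a common $E$, which yields the majorization; equality at $t=1$ is just Fubini, since both sides equal $\int_0^1\!\int_0^1 |f|^{r}$.

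It then remains to invoke Lemma~\ref{karamato} for the nonincreasing pair $(Q,P^{*})$ with exponent $s/r$. In part (a) the exponent is $s/r=p/q\le 1$, so branch (b) of the lemma gives $\int_0^1 Q^{s/r}\le\int_0^1 (P^{*})^{s/r}$, i.e.\ the right-hand side does not exceed the left; in part (b) the exponent is $s/r=q/p\ge 1$, so branch (a) gives the reverse inequality $\int_0^1 Q^{s/r}\ge\int_0^1 (P^{*})^{s/r}$, which is exactly the claim. Taking $1/s$-th powers finishes both cases, and the limiting value $q=\infty$ is recovered by passing to the supremum, the majorization above remaining valid. I expect the partial-integral majorization to be the only substantive obstacle: all the remaining work is bookkeeping of rearrangement invariances, while the crux is the observation that the inner norm of the slicewise rearrangement corresponds to an \emph{independent}, hence larger, choice of concentration sets than the rearrangement of the inner norm.
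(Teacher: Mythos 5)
Your proof is correct, and at the top level it is the same proof as the paper's: both arguments reduce the lemma to the Karamata-type inequality (Lemma~\ref{karamato}), applied to the pair consisting of the inner integral of $|f|^{r}$ (your $P$, the paper's $\Phi$) and the inner integral of the slicewise rearrangement (your $Q$, the paper's $G$), using rearrangement invariance of the outer integral and the monotonicity of $Q$. Where you genuinely diverge is in how the hypothesis of Lemma~\ref{karamato} is verified, and there your version is the defensible one. The paper asserts the pointwise bound $\Phi^{*}(t)\le G(t)$ on $(0,1)$; as literally stated this is false in general: for $q=1$ and $f(x_1,x_2)=\chi_{[0,1/2)}(x_1)\chi_{[0,1/2)}(x_2)+\chi_{[1/2,1)}(x_1)\chi_{[1/2,1)}(x_2)$ one gets $\Phi^{*}\equiv 1/2$ while $G=\chi_{[0,1/2)}$, so the claimed inequality fails on $[1/2,1)$. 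What Lemma~\ref{karamato} actually requires is only the integrated majorization $\int_0^t G \ge \int_0^t \Phi^{*}$ together with equality of the total integrals, and that is precisely what you prove, via the maximal identity $\int_0^t h^{*}=\sup_{|E|=t}\int_E h$ and the observation that the slicewise rearrangement amounts to an independent choice of concentration set $E_{x_2}$ in each slice, which can only beat a common choice of $E$. So your write-up follows the paper's mechanism while repairing its one questionable intermediate step. Two small additions would make it airtight: extend $Q$ and $P^{*}$ by zero to $(0,\infty)$ so that Lemma~\ref{karamato}, stated on the half-line, applies verbatim; and for $q=\infty$ spell out the limiting argument you allude to (on $[0,1]$ the $L_q$ norms increase to the $L_\infty$ norm, so both sides converge by monotone convergence), since the Karamata lemma itself has no branch for the exponent $\infty$.
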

	\begin{proof}a) Let $\Phi^*(t_2) $ denote the decreasing rearrangement of the function $\Phi(x_2)=\int_0^1 |f(x_1,x_2)|^q dx_1$. If $G(t_2)= \int_0^1 (f^{*_2}(x_1,t_2))^qd x_1
		,$ then
		$$
		\Phi^*(t_2)\leq G(t_2), \;\;\;\; t\in (0,1)
		$$
		$$
		\Phi^*(1)=G(1).
		$$
Applying Lemma~\ref{karamato} to these functions and noting that $ \alpha = \frac{p}{q} \le 1,$
we obtain the desired inequality in part (a). The argument for part (b) is analogous.
\end{proof}
\begin{lemma}
[Hardy's inequality] \label{hardy} \cite[p. 14]{bookNursultanov}
		Let  $\alpha>0, \quad \beta=\max \left(\frac{1}{q}, \frac{1}{r}\right), 0<q \leq \infty $
		then the following inequality holds:\\
		\[
		\begin{aligned}
			\left(\int_0^{\infty}\left(t^{-\alpha}\left(\int_0^t\left(f^*(s)\right)^r d s\right)^{1 /r}\right)^q \frac{d t}{t}\right)^{1 / q} \leq C \frac{1}{\alpha^\beta}\left(\int_0^{\infty}\left(t^{\frac{1}{r}-\alpha} f^*(t)\right)^q \frac{d t}{t}\right)^{1 / q}, \\
			\left(\int_0^{\infty}\left(t^\alpha\left(\int_t^{\infty}\left(f^*(s)\right)^r d s\right)^{1 / r}\right)^q \frac{d t}{t}\right)^{1/q} \leq C\frac{1}{\alpha^\beta}\left(\int_0^{\infty}\left(t^{\alpha+\frac{1}{r}} f^*(t)\right)^q \frac{d t}{t}\right)^{1 / q},
		\end{aligned}
		\]
		here the corresponding constant do not depend on the parameter~$\alpha$.
	\end{lemma}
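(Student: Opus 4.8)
The plan is to prove both inequalities by reducing them to a single discrete weighted Hardy inequality with a geometric weight, on which the dependence of the constant on $\alpha$ can be read off transparently. I would treat only the first inequality in detail, since the second follows either by the change of variables $t\mapsto 1/t$, $s\mapsto 1/s$ (which interchanges $\int_0^t$ with $\int_t^{\infty}$ and turns the weight $t^{-\alpha}$ into $t^{\alpha}$), or by repeating the argument below with tails $\sum_{j\ge k}$ in place of initial segments $\sum_{j\le k}$.

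First I would discretize. Writing $t=2^{k}$ and using that $f^{*}$ is nonincreasing, on each dyadic block one has $\int_{2^{j-1}}^{2^{j}}(f^{*})^{r}\,ds\asymp 2^{j}(f^{*}(2^{j}))^{r}$, while the outer weight $t^{-\alpha q}$ together with $dt/t$ is comparable, block by block, to $2^{-\alpha q k}$ with comparison constants bounded by $2^{\alpha q}\le 2^{q}$ in the relevant range $0<\alpha\le 1$. Setting $c_j=2^{j}(f^{*}(2^{j}))^{r}$, $w=2^{-\alpha q}\in(0,1)$ and $\gamma=q/r$, both sides collapse, up to constants depending only on $q,r$, to the discrete inequality
$$\sum_{k}w^{k}\Big(\sum_{j\le k}c_j\Big)^{\gamma}\ \le\ \frac{C}{\alpha^{\beta q}}\sum_{k}w^{k}c_k^{\gamma},\qquad c_j\ge 0 .$$
Here the target exponent is $\beta q=q\max(1/q,1/r)=\max(1,\gamma)$, so I must produce the factor $(1-w)^{-\max(1,\gamma)}$ and then convert it into $\alpha^{-\beta q}$.

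The discrete inequality splits into two cases. If $\gamma<1$ (that is $q<r$, $\beta=1/q$), subadditivity of $x\mapsto x^{\gamma}$ gives $(\sum_{j\le k}c_j)^{\gamma}\le\sum_{j\le k}c_j^{\gamma}$, and interchanging the order of summation produces exactly the geometric factor $\sum_{k\ge j}w^{k}=w^{j}/(1-w)$, i.e. the constant $(1-w)^{-1}$. If $\gamma\ge 1$ (that is $q\ge r$, $\beta=1/r$), I would insert a geometric weight and apply H\"older with exponents $\gamma,\gamma'$: for a parameter $0<\sigma<1/\gamma$,
$$\Big(\sum_{j\le k}c_j\Big)^{\gamma}\le\Big(\sum_{j\le k}c_j^{\gamma}w^{-\sigma\gamma(k-j)}\Big)\Big(\sum_{m\ge 0}w^{\sigma\gamma' m}\Big)^{\gamma/\gamma'},$$
after which interchanging summation yields the two geometric factors $(1-w^{\sigma\gamma'})^{-\gamma/\gamma'}$ and $(1-w^{\,1-\sigma\gamma})^{-1}$, whose exponents add to $\gamma/\gamma'+1=\gamma$. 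The last ingredient is the elementary two-sided estimate $1-2^{-\lambda\alpha q}\asymp\alpha$, valid for $0<\alpha\le 1$ with constants depending only on $\lambda$ and $q$; feeding this into both cases turns the weight-gap factors into $\alpha^{-\max(1,\gamma)}=\alpha^{-\beta q}$, and taking $q$-th roots gives the asserted constant $C\alpha^{-\beta}$ with $C$ independent of $\alpha$.

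The main obstacle is precisely the sharp bookkeeping of the $\alpha$-dependence: a plain Minkowski or duality argument would replace the right-hand $L^{q}(dt/t)$-norm by an $L^{r}(dt/t)$-norm, which is inadmissible when $q\neq r$, so the monotonicity of $f^{*}$ must be used essentially, as it is what legitimizes the dyadic comparison $\int_{2^{j-1}}^{2^{j}}(f^{*})^{r}\asymp 2^{j}(f^{*}(2^{j}))^{r}$. The delicate point is to choose $\sigma$ in the H\"older step so that the accumulated exponent of $(1-w)$ is exactly $\gamma$ and no larger, and to verify that every comparison constant introduced by the discretization depends only on $q$ and $r$; this is where the restriction to bounded $\alpha$ (e.g. $\alpha\le 1$, as for the grand-space parameter $\varepsilon\le 1$) enters. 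Finally, the case $q=\infty$ is handled directly, the inner and outer integrals becoming suprema and the same dyadic comparison yielding the stated $\sup$-form.
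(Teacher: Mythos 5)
Your proposal is correct, and in the case $q<r$ it coincides with the paper's own argument: dyadic discretization using the monotonicity of $f^*$, subadditivity of $x\mapsto x^{q/r}$ (the paper invokes this as Jensen's inequality), interchange of the order of summation, and the geometric-series estimate $(1-2^{-\alpha q})^{-1}\asymp\alpha^{-1}$, producing the factor $\alpha^{-1/q}$. Where you genuinely diverge is the case $r\le q$: the paper stays continuous there — it substitutes $s\mapsto st$ in the inner integral, applies the generalized Minkowski inequality (admissible since $q/r\ge 1$), and undoes the substitution, so the $\alpha$-dependence appears as the explicit factor $\left(\int_0^1 s^{\alpha r-1}\,ds\right)^{1/r}=(\alpha r)^{-1/r}$; that route is valid for every $\alpha>0$, uses no monotonicity of $f^*$, and introduces no discretization constants. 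Your discrete route (H\"older with the inserted geometric weight $w^{-\sigma(k-j)}$, $0<\sigma<1/\gamma$) also yields the exponent $\gamma/\gamma'+1=\gamma=q/r$, hence $\alpha^{-1/r}$ after taking $q$-th roots, but it needs $f^*$ monotone and $\alpha$ bounded for the dyadic comparisons — a harmless restriction, as you note, since for $\alpha\ge 1/r$ the right-hand side is infinite for nontrivial $f$ and the inequality is vacuous, a point worth stating explicitly. Two small repairs: the block-by-block claim $\int_{2^{j-1}}^{2^j}(f^*)^r\,ds\asymp 2^j(f^*(2^j))^r$ is not true with uniform constants (the upper bound involves $f^*(2^{j-1})$, which may be much larger); what is true, and all you need, is the cumulative version $\int_0^{2^k}(f^*)^r\,ds\asymp\sum_{j\le k}2^j(f^*(2^j))^r$. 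Also, for the second (tail) inequality prefer your ``repeat with tails'' alternative — which is what the paper does — over the substitution $t\mapsto 1/t$, since the transformed function $f^*(1/u)\,u^{-2/r}$ is no longer monotone, so in the case $q<r$ the dyadic comparison would have to be re-justified for it.
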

	\begin{proof}
		
		Let us make a change of variables in the inner integral. We will prove Hardy’s inequality in the form that we need:
		$$
		\left(\int_0^{\infty}\left(t^{-\alpha}\left(\int_0^t\left(f^*(s)\right)^r d s\right)^{1 / r}\right)^q \frac{d t}{t}\right)^{1 / q}=\left( \int_0^\infty\left(t^{-\alpha}\left(\int_0^1\left(f^*(s t)\right)^r t d s\right)^{1 / r}\right)^q \frac{dt}{t}\right) ^{1/q}
		$$
		Let $r\leq q$, then applying the generalized Minkowski inequality and performing the inverse change of variables, we obtain:
		\[
		\begin{gathered}
			\leq\left(\int_0^{1}\left(\int_0^\infty\left(f^*(st)t^{\frac{1}{r}-\alpha}\right)^q \frac{d t}{t}\right)^\frac{r}{q} ds\right)^{1 / r}= \\
			=\left(\int_0^1\left(s^{\alpha-\frac{1}{r}}\right)^r d s\right)^{1 / r}\left(\int_0^{\infty}\left(t^{\frac{1}{r}-\alpha} f^*(t)\right)^q \frac{d t}{t}\right)^{1 / q}= \\ =\frac{1}{(\alpha r)^{1 / r}}\left(\int_0^{\infty}\left(t^{\frac{1}{\tau}-\alpha} f^*(t)\right)^q \frac{d t}{t}\right)^{1 / q}
		\end{gathered}
		\]
		Similarly, \\
		$\begin{aligned} & \left(\int_0^{\infty}\left(t^\alpha\left(\int_t^{\infty}\left(f^*(s)\right)^r d s\right)^{1 / r}\right)^q \frac{d t}{t}\right)^{1 /q}=\left(\int_0^{\infty}\left(t^\alpha\left(\int_1^{\infty}\left(f^*(s t)\right)^r t d s\right)^{1 /r}\right)^q \frac{d t}{t}\right)^{1/q} \\ \leq & \left(\int_1^{\infty}\left(\int_0^{\infty}\left(t^{\alpha+\frac{1}{r}} f^*(s t)\right)^q \frac{d t}{t}\right)^{\frac{r}{q}} d s\right)^{1 / r}= \\ = & \left(\int_1^{\infty}\left(s^{-\alpha-\frac{1}{r}}\right)^r d s\right)^{1 / r}\left(\int_0^{\infty}\left(t^{\alpha+\frac{1}{r}} f(t)\right)^t \frac{d t}{t}\right)^{1 / q}= \\ = & \frac{1}{(\alpha r)^{1 / r}}\left(\int_0^{\infty}\left(t^{\alpha+\frac{1}{r}} f^*(t)\right)^q \frac{d t}{t}\right)^{1 / q}.\end{aligned}$\\
        \vspace{0.5cm}
		Let $q<r$. Since $f^*$ is decreasing, we obtain: \\
		$\begin{gathered}\left(\int_0^{\infty}\left(t^{-\alpha} \int_0^t\left(f^*(s)\right)^{r} d s\right)^q \frac{d t}{t}\right)^{1 / q} \asymp \left(\sum_{k \in \mathbb{Z}}\left(2^{-\alpha k}\left(\sum_{m=-\infty}^k 2^m\left(f^*\left(2^m\right)\right)^r\right)^{1 / r}\right)^q\right)^{1 / q}.\end{gathered}$\\
		We apply Jensen's inequality:\\
		$$	
		\leq\left(\sum_{k \in \mathbb{Z}} 2^{-\alpha qk } \sum_{m=-\infty}^k\left(2^{\frac mr} f^*\left(2^m\right)\right)^q\right)^{1 /q} =\left(\sum_{k \in \mathbb{Z}}\left(2^{\frac{m}{r}} f^*\left(2^m\right)\right)^q \sum_{k=m}^{\infty} 2^{-\alpha q k}\right)^{1 / q}
		$$
		$$
		\asymp \frac{1}{\alpha^\frac{1}{q}}\left(\sum_{m \in \mathbb{Z}}\left(2^{\left(-\alpha+\frac{1}{\tau}\right) m} f^*\left(2^m\right)\right)^q\right)^{1 / q}  \asymp \frac{1}{\alpha^{1 / q}}\left(\int_0^{\infty}\left(t^{-\alpha+\frac1r} f(t)\right)^q\frac{d t}{t}\right)^{1 / q}. 
		$$
		Notice that the corresponding constant does not depend on the parameter~$\alpha$. Similarly, 
		$$
	\left(\int_0^{\infty}\left(t^\alpha\left({\int_t^\infty}\left(f^*(s)\right)^r d s\right)^{1 / r}\right)^q \frac{d t}{t}\right)^{1 / q} \asymp \\
		\left(\sum_{k \in \mathbb{Z}}\left(2^{\alpha k}\left(\sum_{m=k}^{\infty} 2^m\left(f^*\left(2^m\right)\right)^r\right)^{1 / r}\right)^q\right)^{1 / q}
		$$
		$$
		\leq \left(\sum_{k \in \mathbb{Z}}2^{\alpha q k}\sum_{m=k}^{\infty} 2^{\frac{qm}r}\left(f^*\left(2^m\right)\right)^q\right)^{1 / q}
		=\left(\sum_{m \in \Bbb Z}\left(2^{\frac{m}{r}} f^*\left(2^m\right)\right)^q \sum_{k=-\infty}^m 2^{\alpha k q}\right)^{1 / q} 
		$$
		$$
		\asymp\frac{1}{\alpha^{1 / q}}\left(\sum_{m \in\Bbb Z}^{\infty}\left(2^{\left(\alpha+\frac{1}{r}\right)m} f^*\left(2^m\right)\right)^q\right)^{1 / q}
		\asymp  \frac{1}{\alpha^\frac{1}{q}}\left(\int_0^{\infty}\left(t^{\alpha+\frac{1}{r}} f^*\left(t\right)\right)^q \frac{d t}{t}\right)^{1 / q} \\
		$$
	\end{proof}

	\section{Anisotropic Interpolation Method} 
Let $ A_1 $ be a Banach space and $ A_2 $ be a functional Banach lattice. Let us denote by $ {\bf A} = (A_1, A_2) $ the space of $
	A_1 $ -- valued measurable functions such that $ \| f\|_{A_1} \in A_2 $ with the norm $ \| f \| = \| \| f(x)\|_{A_1} \|_{A_2} $.
	
	The space $ {\bf A}=(A_1, \ldots, A_n)$ is defined inductively. We will call it a {\it space with a mixed metric} \index{Space with a mixed metric}.
	
	Let ${ \bf A_0} = (A_1^{0},...,A_n^{0}),\; {\bf A_1} =
	(A_1^1,...,A_n^1) $ are two spaces with mixed metric, $ E =
	\{\varepsilon=(\varepsilon_1,...,\varepsilon_n):\varepsilon_i=0,$ or $
	\varepsilon_i=1,\;\;i=1,...,n \}$ be the set of all vertices of the unit cube $[0,1]^n$ in $\Bbb R^n$. For an arbitrary $\varepsilon\in E$, we define the space 
	${ {\bf A}_\varepsilon} =
	(A_1^{\varepsilon_1},...,A_n^{\varepsilon_n})$ with the norm
	$$
	\|a\|_{\bf A} = \|...\|a\|_{A_1^{\varepsilon_1}}...\|_{A_n^{\varepsilon_n}}.
	$$
	
	A pair of spaces with mixed metrics
	$ {\bf A_0} = (A_1^{0},...,A_n^{0})$ and ${ \bf A_1} = (A_1^1,...,A_n^1) $
	is called {\it compatible}\index{Compatible spaces with mixed metrics}, if there exists a linear topological Hausdorff space that contains the spaces
	${\bf A_\varepsilon}$ for all ${\varepsilon\in
		E}$ as subspaces.
	
	Let us define the functional: \index{Functional Peetre $K(t,a;{\bf A_0},{\bf A_1})$}
	
	\begin{equation}\label{2.9}
		K(t,a;{\bf A_0},{\bf A_1}) = \inf\{\sum_{\varepsilon\in E} t^{\varepsilon}
		\|a_{\varepsilon}\|_{\bf {A_{\varepsilon}}}\;:\;a=\sum_{\varepsilon\in E}
		a_\varepsilon,\;a_\varepsilon\in {\bf A_\varepsilon}\},
	\end{equation}
	where $ t^{\varepsilon} = t_1^{\varepsilon_1}\ldots t_n^{\varepsilon_n}.$

	Let $ 0 < \bar\theta=(\theta_1,\ldots,\theta_n) < 1,\;\; 0 <
	{\bar q} =(q_1,\ldots,q_n)\leq \infty.$
	By ${ \bf A}_{\bar\theta, \bar q} = ({\bf A_0, A_1})_{\theta,{\bar q}} $ \index{Space ${ \bf A}_{\bar\theta, \bar q} = ({\bf A_0, A_1})_{\theta,{\bf q^\star}} $ }
	we denote the linear subset $ \sum_{\varepsilon\in E}{\bf
		A}_\varepsilon,$ such that for whose elements the following is true:
	
	\begin{equation}\label{2.10}
		\|a\|_{\bf A_{\bar \theta, \bar q}} =
	\end{equation}
	$$
	=\left(\int_0^\infty\ldots\left(\int_0^\infty
	\left(t_1^{-\theta_1}\ldots
	t_n^{-\theta_n}K(t_1,\ldots,t_n;a)\right)^{q_{1}}
	\frac{dt_{1}}{t_{1}}\right)^\frac{q_{2}}{q_{1}]}\ldots
	\frac{dt_{n}}{t_{n}}\right)^\frac1{q_{n}}.
	$$
	
	\begin{lemma}\label{Le0}  \cite{9} Let $ \bar \theta\in (0,1)^n$ and $0<{\bar q}\leq \infty$.
		
		(i) If $\bar q_1 \leq \bar q,$ then
		$$
		({\bf A_0,A_1})_{\bar \theta, \bar q_1} \hookrightarrow ({\bf
			A_0,A_1})_{\bar\theta, \bar q}.
		$$
		
		(ii)  If  $T$
		is a quasi-linear operator such that for all  $\varepsilon\in E$
		$$
		T : \bf {A_{\varepsilon}} \to \bf {B_\varepsilon},
		$$
		with norm $ M_\varepsilon$, then
		$$
		T :{ \bf A}_{\bar \theta, {\bar q}} \to {\bf B}_{\bar \theta, {\bar  q}},\;\;
		$$
		with $\|T\|\le\max_{\varepsilon\in E}M_\varepsilon$.
		(iii)\label{iii}
		$$
		({\bf A_0,A_1})_{\bar\theta, \bar q}= ({\bf A_1,A_0})_{\overline {1-\theta},\; \bar q}.
		$$
	\end{lemma}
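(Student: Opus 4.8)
The plan is to deduce all three parts from two structural facts about the multiparameter Peetre functional $K(\bar t, a) := K(t_1,\ldots,t_n; a; {\bf A}_0, {\bf A}_1)$ of \eqref{2.9}, where $\bar t = (t_1,\ldots,t_n)$. First, $K(\bar t, a)$ is subadditive in $a$: this is immediate from the infimum in \eqref{2.9}, since summing admissible decompositions of $a$ and $a'$ yields an admissible decomposition of $a+a'$. Second, with the remaining variables frozen, the map $t_i \mapsto K(\bar t, a)$ is nondecreasing while $t_i \mapsto K(\bar t, a)/t_i$ is nonincreasing; this follows by comparing the $\varepsilon_i = 0$ and $\varepsilon_i = 1$ contributions in the $i$-th slot of the infimum, and it forces $\phi_i(t_i) := t_i^{-\theta_i}K(\bar t,a)$ to vanish like $t_i^{1-\theta_i}$ near $0$ and like $t_i^{-\theta_i}$ near $\infty$ (here the hypothesis $\bar\theta \in (0,1)^n$ is used at both endpoints).

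For part (ii) I would invoke subadditivity directly. Fix a decomposition $a = \sum_{\varepsilon \in E} a_\varepsilon$ with $a_\varepsilon \in {\bf A}_\varepsilon$. Quasi-linearity of $T$ furnishes a decomposition $Ta = \sum_{\varepsilon} b_\varepsilon$ with $b_\varepsilon \in {\bf B}_\varepsilon$ and $\|b_\varepsilon\|_{{\bf B}_\varepsilon} \le M_\varepsilon \|a_\varepsilon\|_{{\bf A}_\varepsilon}$. Substituting into \eqref{2.9} and extracting the factor $\max_\varepsilon M_\varepsilon$ from the sum $\sum_\varepsilon \bar t^\varepsilon \|b_\varepsilon\|_{{\bf B}_\varepsilon}$ gives, after taking the infimum over decompositions of $a$, the pointwise bound $K(\bar t, Ta; {\bf B}_0, {\bf B}_1) \lesssim (\max_\varepsilon M_\varepsilon)\, K(\bar t, a; {\bf A}_0, {\bf A}_1)$. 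Inserting this into the iterated integral \eqref{2.10} yields $\|Ta\|_{{\bf B}_{\bar\theta,\bar q}} \le (\max_\varepsilon M_\varepsilon)\,\|a\|_{{\bf A}_{\bar\theta,\bar q}}$.

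For part (iii) the key observation is the scaling identity
\[
K(\bar t, a; {\bf A}_1, {\bf A}_0) = (t_1\cdots t_n)\, K(1/t_1,\ldots,1/t_n; a; {\bf A}_0, {\bf A}_1),
\]
obtained from \eqref{2.9} by relabeling $\varepsilon_i \mapsto 1-\varepsilon_i$ and factoring out $t_1\cdots t_n$. I would substitute this into the norm of $({\bf A}_1,{\bf A}_0)_{\overline{1-\theta},\bar q}$, whose weights are $\prod_i t_i^{-(1-\theta_i)}$, and change variables $s_i = 1/t_i$ in each integral. Since $dt_i/t_i$ is invariant under $t_i \mapsto 1/t_i$ and $\prod_i t_i^{-(1-\theta_i)}\cdot \prod_i t_i = \prod_i s_i^{-\theta_i}$, the integrand becomes exactly $\prod_i s_i^{-\theta_i} K(\bar s, a; {\bf A}_0, {\bf A}_1)$, so the two norms coincide.

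Statement (i) is the part I expect to demand the most care. On the infinite-measure space $((0,\infty), dt/t)$ a naive inclusion between two Lebesgue spaces fails, so the one-variable embedding $(\cdot)_{\theta, u} \hookrightarrow (\cdot)_{\theta, v}$ for $u \le v$ must exploit the monotone structure above: the decay of $\phi_i$ at both ends gives the uniform bound $\sup_{t_i} \phi_i(t_i) \lesssim \|\,\cdot\,\|_{\theta, u}$, after which $\|\phi_i\|_{L^v(dt_i/t_i)}^{v} \le \|\phi_i\|_\infty^{\,v-u}\,\|\phi_i\|_{L^u(dt_i/t_i)}^{u}$ yields the inclusion. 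In the anisotropic setting I would apply this single-variable comparison coordinate by coordinate, starting from the innermost integral and moving outward. The main obstacle is precisely the propagation of the monotone/quasi-concave structure through the nested mixed norm: after integrating in $t_1$, the resulting inner quantity, viewed as a function of $(t_2,\ldots,t_n)$, must still carry enough of this structure for the estimate to apply in $t_2$, and one must check that the accumulated constants compose without acquiring a dependence that destroys the uniform embedding. Verifying this inductive step is the technical heart of (i); Hardy-type control as in Lemma \ref{hardy} can be used to supply the monotonicity estimates needed at each stage.
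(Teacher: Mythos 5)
The paper offers no proof of this lemma at all: it is imported from the literature with the citation \cite{9}, so there is no in-house argument to compare against, and your proposal has to stand on its own. It essentially does. Parts (ii) and (iii) are complete and correct: the pointwise bound $K(\bar t,Ta;{\bf B_0},{\bf B_1})\le(\max_{\varepsilon}M_\varepsilon)\,K(\bar t,a;{\bf A_0},{\bf A_1})$ followed by monotonicity of the mixed norm \eqref{2.10} is exactly how (ii) goes (one caveat: for a genuinely quasi-linear, as opposed to linear, $T$, the decomposition $Ta=\sum_\varepsilon b_\varepsilon$ only holds up to the quasi-linearity constant, which would then multiply $\max_\varepsilon M_\varepsilon$; with the constant exactly $\max_\varepsilon M_\varepsilon$ the argument is really for linear $T$, which is all the paper ever uses). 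Your scaling identity $K(\bar t,a;{\bf A_1},{\bf A_0})=(t_1\cdots t_n)K(1/t_1,\ldots,1/t_n;a;{\bf A_0},{\bf A_1})$, obtained from \eqref{2.9} by the relabeling $\varepsilon\mapsto\bar 1-\varepsilon$, together with the invariance of $dt_i/t_i$ under $t_i\mapsto 1/t_i$, is precisely the standard proof of (iii).

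For (i), your mechanism is the right one, and the inductive step you flag as unverified does close, for a reason worth stating explicitly: $K(\bar t,a)$ is nondecreasing in each $t_i$ separately (each competitor $\sum_\varepsilon t^\varepsilon\|a_\varepsilon\|_{{\bf A}_\varepsilon}$ is), and monotone integrands give monotone integrals, so the inner quantity $G(t_2,\ldots,t_n)=\bigl(\int_0^\infty(t_1^{-\theta_1}K)^{q_1}\,dt_1/t_1\bigr)^{1/q_1}$ is again nondecreasing in each remaining variable, and likewise at every later stage. Hence at each coordinate one faces $t_i^{-\theta_i}\Phi(t_i)$ with $\Phi$ nondecreasing, for which comparison over $[t_i,2t_i]$ gives $\sup_{t_i}t_i^{-\theta_i}\Phi(t_i)\le 2^{\theta_i}(\ln 2)^{-1/u_i}\|t_i^{-\theta_i}\Phi\|_{L^{u_i}(dt_i/t_i)}$, and your interpolation inequality $\|\phi\|_{L^v}^v\le\|\phi\|_\infty^{\,v-u}\|\phi\|_{L^u}^u$ finishes that coordinate; the constants multiply across coordinates and depend only on $\bar\theta$ and $\bar q_1$. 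Two cosmetic corrections: Hardy's inequality (Lemma~\ref{hardy}) is not needed anywhere in this argument, and the pointwise decay rates you assert for $\phi_i$ near $0$ and $\infty$ are not valid upper bounds for general $a$ (quasi-concavity of $K$ gives $t_i^{1-\theta_i}$ as a \emph{lower} bound rate near $0$), nor are they needed --- only the sup bound above, which uses monotonicity and finiteness of the $\bar q_1$-norm, enters the proof.
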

		We will use an anisotropic interpolation method in case $n=2$.\\
		Let $0<\bar p=(p_1,p_2)\leq \infty$.The Lebesgue space with mixed norm will be denoted by $L_{\overline{p}}(\mathbb{T}^2)$ or $L_{(p_1,p_2)}(\mathbb{T}^2)$, where norm is the following: 
	\begin{gather}\label{4.3}
		\|f\|_{L_{\overline{p}}(\mathbb{T}^2)}=\left(\int_0^{1}\left(\int_0^{1}\left(f\left(x, x_2\right)\right)^{p_1} d x_1\right)^\frac{p_2}{p_1} d x_2\right)^\frac{1}{p_2}. 
	\end{gather}
	Notice that $L_{\overline{p}}$ space is identical to the anisotropic Lorentz space $L_{\overline{p},\overline{q}}$ only when $p_1=p_2=q_1=q_2$. 

	\begin{theorem}\label{Te1} Let $0<\bar q\leq\infty, \bar \theta\in(0,1)^2$ and $\beta_i=\max\{\frac12,\frac1{q_i}\}, \; i=1,2$. 
		\begin{equation}\label{int0}
		(L_{\bar 1}, L_{\bar 2})_{\bar \theta, \bar q}=L_{\bar p,\bar q}
	\end{equation}
		where $\frac 1{ p_i}=1-\frac{ \theta_i}2, \;i=1,2$.
Moreover,
	\begin{equation}\label{int}
	\|f\|_{(L_{\bar 1}, L_{\bar 2})_{\bar \theta, \bar q}}\leq 6 D(\theta_1,\theta_2)\|f\|_{L_{\bar p,\bar q}}
\end{equation}
	where
	\begin{equation}\label{const}
	D(\theta_1,\theta_2)=\max\left\lbrace \frac1{\theta_1\theta_2},\frac1{(1-\theta_1)^{\beta_1}\theta_2},\frac1{(1-\theta_2)^{\beta_2}\theta_1},
	\frac1{(1-\theta_2)^{\beta_2}(1-\theta_1)^{\beta_1}} \right\rbrace 
	\end{equation}
	\end{theorem}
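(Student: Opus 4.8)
The plan is to reduce the identity \eqref{int0} to the one–dimensional interpolation formula $(L_1,L_2)_{\theta,q}=L_{p,q}$ applied successively in each variable, exploiting that the anisotropic $K$–method of Section~4 is built inductively. Concretely, I would first obtain a two–sided estimate for the Peetre functional $K(t_1,t_2,f;L_{\bar 1},L_{\bar 2})$ in terms of the iterated rearrangement $f^{*_1,*_2}$, and then substitute this estimate into the definition \eqref{2.10} of the interpolation norm and collapse the resulting weighted double integrals by means of Hardy's inequality (Lemma~\ref{hardy}) in each of the two variables. The quantitative constant in \eqref{int} will be produced entirely by the explicit constants in Lemma~\ref{hardy}.

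For the $K$–functional I would use the Holmstedt–type equivalence for the couple $(L_1,L_2)$, for which $\bigl(1-\tfrac12\bigr)^{-1}=2$, namely
\[
K(t,g;L_1,L_2)\asymp\int_0^{t^2}g^*(s)\,ds+t\left(\int_{t^2}^{\infty}\bigl(g^*(s)\bigr)^2\,ds\right)^{1/2}.
\]
Iterating this in the mixed metric (first in $x_1$ with the $x_2$–norm frozen, then in $x_2$) and using Lemma~\ref{mink} to replace the mixed $L_{\bar 1},L_{\bar 2}$ norms by expressions in the iterated rearrangement — with Lemma~\ref{karamato} justifying the passage to rearrangements — yields a two–sided bound for $K(t_1,t_2,f)$ as a sum of four products, each factor being one of the two terms above in the corresponding variable.

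Inserting this bound into \eqref{2.10} and expanding, the interpolation norm is majorised by a sum of four iterated integrals. In each I would perform the substitution $t_i\mapsto t_i^2$, which turns the weight $t_i^{-\theta_i}$ into $t_i^{-\theta_i/2}$ and is exactly what forces the relation $\tfrac1{p_i}=1-\tfrac{\theta_i}{2}$. Each ``head'' factor $\int_0^{t_i^2}(\cdot)$ is then controlled by the first inequality of Lemma~\ref{hardy} with $\alpha=\theta_i/2$ and $r=1$, contributing a factor of order $\theta_i^{-1}$, while each ``tail'' factor $t_i\bigl(\int_{t_i^2}^{\infty}(\cdot)^2\bigr)^{1/2}$ is controlled by the second inequality with $\alpha=(1-\theta_i)/2$ and $r=2$, contributing $(1-\theta_i)^{-\beta_i}$ with $\beta_i=\max\{\tfrac12,\tfrac1{q_i}\}$ precisely as in the statement. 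The four combinations of head/tail over the two variables reproduce the four entries of $D(\theta_1,\theta_2)$ in \eqref{const}, and the finitely many numerical prefactors (from the $\asymp$ in Holmstedt and the Jacobians $\tfrac12$) aggregate into the stated numerical constant $6$.

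I expect the main obstacle to be the rigorous derivation and constant–tracking of the anisotropic $K$–functional estimate: the iterated Holmstedt bound for mixed–norm couples must be established so that the implied constants are genuinely independent of $\bar\theta$ and $\bar q$, since any hidden dependence would spoil the sharp form of $D(\theta_1,\theta_2)$. The reverse inequality needed for the set equality \eqref{int0} does not affect the constant in \eqref{int}: it follows from the lower Holmstedt bound together with the elementary reverse estimates (each term of $K$ is bounded below by a single–scale value of $f^{*_1,*_2}$, using monotonicity of the rearrangement), which recover $\|f\|_{L_{\bar p,\bar q}}\lesssim\|f\|_{(L_{\bar 1},L_{\bar 2})_{\bar\theta,\bar q}}$ without invoking Hardy.
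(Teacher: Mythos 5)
Your proposal is correct and follows essentially the same route as the paper: the paper's explicit four-fold decomposition $f=f_{00}+f_{01}+f_{10}+f_{11}$ (built from level sets with $w_i=t_i^2$) is precisely a hand-made realization of the iterated Holmstedt-type upper bound you invoke, and the subsequent steps --- passing to $f^{*_1,*_2}$ via Lemma~\ref{mink}, the substitution that forces $\frac1{p_i}=1-\frac{\theta_i}2$, and Hardy's inequality (Lemma~\ref{hardy}) with $\alpha=\theta_i/2$, $r=1$ on the ``head'' terms and $\alpha=(1-\theta_i)/2$, $r=2$ on the ``tail'' terms --- yield exactly the four entries of $D(\theta_1,\theta_2)$ in \eqref{const}. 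The paper likewise does not reprove the set equality \eqref{int0}, attributing it to earlier work of Nursultanov, just as you relegate the reverse inequality to standard lower bounds.
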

	\begin{remark}
Identity \eqref{int0} shows that the interpolation of Lebesgue spaces with a mixed metric, with respect to the anisotropic interpolation method, is described by an anisotropic Lorentz space.  
Equality \eqref{int0} was previously established by Nursultanov~E.\,D.~\cite{N2000Izv}.  
The contribution of the present theorem is the determination of the exact dependence of the constant in inequality~\eqref{int} on the parameters $\theta_{1}$ and $\theta_{2}$.  
This estimate will play an essential role in the proofs of the main results of this paper.
	\end{remark}
	\begin{proof} Let $t_1>0, t_2>0$.
	Consider the decomposition $ f = f_{0 0} + f_{0 1} +
	f_{1 0} + f_{1 1},$ which depends on the parameters $t_{1}$ and $t_{2}$ and is constructed as follows.
	Let $w_1 = t_1^2,
	\;w_2 =t_2^2,\;\;
	\phi_{\Omega_{x_2}}(x_1) is $ the characteristic function of the set $\Omega_{x_2} = \{(x_1,x_2)\;:\;
	|f(x_1,x_2)| > f^{\ast_1}(w_1,x_2)\}\bigcup e_{x_2}, $ where $e_{x_2} - $ 
	is a measurable subset of $\{(x_1,x_2) : |f(x_1,x_2)| = f^{\ast_1}(w_1,x_2)\}$ 
	such that  $\mu_1(\Omega_{x_2}) = w_1.$
	Such a set can always be chosen, since for fixed $x_{2}$ we have
	$$
	\mu_1\{(x_1,x_2) : |f(x_1,x_2)| > f^{\ast_1}(w_1,x_2)\} \leq w_1 \leq
	$$
	$$
	\leq \mu_1\{(x_1,x_2) : |f(x_1,x_2)| \geq f^{\ast}(w_1,x_2)\}.
	$$
	Define:
	$$
	g_0(x_1,x_2) = f(x_1,x_2) \phi_{\Omega_{x_2}}(x_1)
	$$
	$$
	g_1(x_1,x_2) = f(x_1,x_2) - g_0(x_1,x_2) .
	$$
	$g_0$ and $g_1$ Each of the functions $g_{0}$ and $g_{1}$ will again be decomposed:
	$$
	g_0 = f_{0 0} + f_{0 1},\;\;\; g_1 = f_{1 0} + f_{1 1} .
	$$
Let  $W_0 = \{x_2\in {\mathbb T} : \| g_0(\cdot,x_2) \|_{L_1} >
	\left ( \|g_0(\cdot,x_2)\|_{L_1} \right )^{\ast_2}(w_2) \} \bigcup e_0,$
	где $ e_0 \subset \{ x_2\in {\mathbb T} : \| g_0(\cdot,x_2) 
	\|_{L_1} =\left ( \| g_0(\cdot,x_2) \|_{p_1^{0} 1} \right 
	)^{\ast_2}(w_2) \}$
	$$
	\mu_2(W_0) = w_2
	$$
	and
	$$
	W_1 = \{ x_2\in {\mathbb T} : \| g_1(.,x_2) \|_{L_2} > \left (
	\| g_1(.,x_2) \|_{L_2} \right )^{\ast_2}(w_2) \} \bigcup e_1 ,
	$$
	where $ e_1\subset \{ x_2\in {\mathbb T} : \| g_1(.,x_2) \|_{L_2} =
	\| g_1(.,x_2) \|_{p_1^1 1} \} $ и $ \mu_2(W_1) = w_2.$ Тогда:
	$$
	f_{0 0}(x_1,x_2) = g_0(x_1,x_2) \phi_{W_0}(x_2),
	f_{0 1}(x_1,x_2) = g_0(x_1,x_2) - f_{0 0}(x_1,x_2),
	$$
	$$
	f_{1 0}(x_1,x_2) = g_1(x_1,x_2) \phi_{W_1}(x_1),
	f_{0 1}(x_1,x_2) = g_0(x_1,x_2) - f_{0 0}(x_1,x_2) .
	$$
	We then set:
	$$
	f = f_{0 0} + f_{0 1} + f_{1 0} + f_{1 1}.
	$$
	Then
	$$
	K(t_1,t_2;f)\leq \|f_{00}\|_{L_{(1,1)}}+t_1 \|f_{10}\|_{L_{(2,1)}}+t_2\|f_{01}\|_{L_{(1,2)}}+t_1t_2 \|f_{11}\|_{L_{(2,2)}}
	$$
	where
	
	$$
	\|f_{00}\|_{L_{(1,1)}}=\int_0^{w_2}\left(  \int_0^{w_1} f^{*_1}(s_1,\cdot)ds_1\right)^{*_2} (s_2)ds_2,
	$$
	
		$$
	\|f_{10}\|_{L_{(2,1)}}=\int_0^{w_2}\left( \left(  \int_{w_1}^\infty (f^{*_1}(s_1,\cdot))^2ds_1\right)^{\frac12} \right)^{*_2} (s_2)ds_2,
	$$
	
	$$
	\|f_{01}\|_{L_{(1,2)}}=\left( \int_{w_2}^\infty\left( \left(  \int_0^{w_1} f^{*_1}(s_1,\cdot)ds_1 \right)^{*_2}(s_2)\right)^2 ds_2\right)^{\frac12} ,
	$$
	$$
	\|f_{11}\|_{L_{(2,2)}}=\left( \int_{w_2}^\infty\left(   \int_0^{w_1} (f^{*_1}(s_1,\cdot))^2ds_1 \right)^{*_2}(s_2)ds_2\right)^{\frac12},
	$$
	here we assume $f(x_1,x_2)=0$ when $(x_1, x_2)\notin [0,1)^2$.
Next, applying Lemma~\ref{mink} and the inequality
	$$
	g^*(t)\leq \frac1t\int_0^tg^*(s)ds=\sup_{|e|=t}\frac1{|e|} \int_e|g(x)|dx
	$$
we get  
	 $$
	 \|f_{00}\|_{L_{(1,1)}}\leq\int_0^{w_2} \sup_{|e|=s_2}\frac1{|e|} \int_e \int_0^{w_1} f^{*_1}(s_1,x_2)ds_1dx_2ds_2,
	 $$
	 $$
	 \leq\int_0^{w_2} \int_0^{w_1}\sup_{|e|=s_2}\frac1{|e|} \int_e  f^{*_1}(s_1,x_2)dx_2ds_1ds_2
	 $$
	 $$
	 =\int_0^{w_2} \int_0^{w_1}\frac1{s_2} \int_0^{s_2}  f^{*_1*_2}(s_1,r_2)dr_2ds_1ds_2,
	 $$
	 $$
	 \|f_{10}\|_{L_{(2,1)}}\leq\int_0^{w_2} \left(  \int_{w_1}^\infty (f^{*_1*_2}(s_1,s_2))^2ds_1\right)^{\frac12} ds_2,
	 $$
	 
	 $$
	 \|f_{01}\|_{L_{(1,2)}}\leq\left( \int_{w_2}^\infty\left( \sup_{|e|=s_2}\frac1{|e|} \int_e   \int_0^{w_1} f^{*_1}(s_1,x_2)ds_1 dx_2\right)^2 ds_2\right)^{\frac12}
	 $$
	 $$
	 \leq\left( \int_{w_2}^\infty\left(  \int_0^{w_1} \frac1{s_2} \int_0^{s_2}  f^{*_1*_2}(s_1,r_2)dr_2ds_1 \right)^2 ds_2\right)^{\frac12} ,
	 $$
	 $$
	 \|f_{11}\|_{L_{(2,2)}}\leq\left( \int_{w_2}^\infty   \int_0^{w_1} (f^{*_1*_2}(s_1,s_2))^2ds_1 ds_2\right)^{\frac12}.
	 $$	
Therefore we have
	$$
	\|f\|_{(L_{\bar (1)}, L_{\bar (2)})_{\bar\theta,\bar q}}=\left( \int_0^\infty\left( \int_0^\infty\left( t_1^{-\theta_1}t_2^{-\theta_2}	K(t_1,t_2;f;L_{(1,1)},L_{(2,2)})\right)^{q_1} \frac{dt_1}{t_1}\right) ^{\frac{q_2}{q_1}}\frac{dt_2}{t_2}\right) ^{\frac1{q_2}}
	$$
	$$
	\leq I_{00}+I_{10}+I_{01}+I_{11}.
	$$
Using Hardy’s inequality (see Lemma~\ref{hardy}), we estimate each term:
	$$
	I_{00}=\left( \int_0^\infty\left( \int_0^\infty\left( t_1^{-\theta_1}t_2^{-\theta_2}	\int_0^{w_2} \int_0^{w_1}\frac1{s_2} \int_0^{s_2}  f^{*_1*_2}(s_1,r_2)dr_2ds_1ds_2\right)^{q_1} \frac{dt_1}{t_1}\right) ^{\frac{q_2}{q_1}}\frac{dt_2}{t_2}\right) ^{\frac1{q_2}}
	$$
	$$
	=\frac14\left( \int_0^\infty\left( \int_0^\infty\left( t_1^{-\theta_1/2}t_2^{-\theta_2/2}	\int_0^{t_2} \int_0^{t_1}\frac1{s_2} \int_0^{s_2}  f^{*_1*_2}(s_1,r_2)dr_2ds_1ds_2\right)^{q_1} \frac{dt_1}{t_1}\right) ^{\frac{q_2}{q_1}}\frac{dt_2}{t_2}\right) ^{\frac1{q_2}}
	$$
	$$
	\leq\frac1{\theta_1\theta_2}\left( \int_0^\infty\left( \int_0^\infty\left( t_1^{1/p_1}t_2^{1/p_2}\frac1{t_2} \int_0^{t_2}  f^{*_1*_2}(t_1,r_2)dr_2\right)^{q_1} \frac{dt_1}{t_1}\right) ^{\frac{q_2}{q_1}}\frac{dt_2}{t_2}\right) ^{\frac1{q_2}}
	$$
	$$
	\leq\frac{p_2}{\theta_1\theta_2}\|f\|_{L_{\bar p,\bar q}}\leq\frac2{\theta_1\theta_2}\|f\|_{L_{\bar p,\bar q}},
	$$
	$$
	I_{10}=\left( \int_0^\infty\left( \int_0^\infty\left( t_1^{1-\theta_1}t_2^{-\theta_2}	\int_0^{w_2} \left(  \int_{w_1}^\infty (f^{*_1*_2}(s_1,s_2))^2ds_1\right)^{\frac12} ds_2\right)^{q_1} \frac{dt_1}{t_1}\right) ^{\frac{q_2}{q_1}}\frac{dt_2}{t_2}\right) ^{\frac1{q_2}}
	$$
	$$
	=\frac14\left( \int_0^\infty\left( \int_0^\infty\left( t_1^{(1-\theta_1)/2}t_2^{-\theta_2/2}	\int_0^{t_2} \left(  \int_{t_1}^\infty (f^{*_1*_2}(s_1,s_2))^2ds_1\right)^{\frac12} ds_2\right)^{q_1} \frac{dt_1}{t_1}\right) ^{\frac{q_2}{q_1}}\frac{dt_2}{t_2}\right) ^{\frac1{q_2}}
	$$
	$$
	\leq \frac1{(1-\theta_1)^{\beta_1}\theta_2}\|f\|_{L_{\bar p,\bar q}},
	$$
Similarly
		$$
	I_{01}\leq\frac2{(1-\theta_2)^{\beta_2}\theta_1}\|f\|_{L_{\bar p,\bar q}},
	$$
		$$
	I_{11}\leq\frac1{(1-\theta_2)^{\beta_2}(1-\theta_1)^{\beta_1}}\|f\|_{L_{\bar p,\bar q}}.
	$$
	\end{proof}	
\section{On the Fourier coefficients of functions in the space $L_{\bar 2,\bar q}$}	
	\begin{lemma}\label{Le3}
Let $1 < q_1, q_2 < 2$, 
$\Phi = \{\varphi_{m_1}(x_1)\}_{m_1=1}^{\infty}$, 
$\Psi = \{\psi_{m_2}(x_2)\}_{m_2=1}^{\infty}$ 
be orthonormal systems on $[0,1]$ which are uniformly bounded, that is,
\[
\|\varphi_{m_1}\|_{\infty} \le M_1, \quad m_1 = 1,2,\ldots,
\]
\[
\|\psi_{m_2}\|_{\infty} \le M_2, \quad m_2 = 1,2,\ldots.
\]
then the following inequalities hold\footnote{The inequalities are understood in the following sense: the finiteness of the right-hand side implies the finiteness of the left-hand side, and the corresponding relation is valid.}:
		\begin{equation}
			\begin{aligned}
				\left(\sum\limits_{k_2=1}^{N_2}\sum\limits_{k_1=1}^{N_1} 
				(a_{k_1,k_2}^{*_1,*_2})^{2} \right)^\frac{1}{2}
				\leq M_2 N_2^{\tfrac{1}{2}}\|f\|_{\mathrm{L}_{(2,1)}}, \\
				\left(\sum\limits_{k_2=1}^{N_2}\sum\limits_{k_1=1}^{N_1} 
				(a_{k_1,k_2}^{*_1,*_2})^{2} \right)^\frac{1}{2}\leq M_1N_1^{\tfrac{1}{2}}\|f\|_{L_{(1,2)}} \\
				\left(\sum\limits_{k_2=1}^{N_2}\sum\limits_{k_1=1}^{N_1} 
				(a_{k_1,k_2}^{*_1,*_2})^{2} \right)^\frac{1}{2}\leq M_1M_2(N_1N_2)^{\tfrac{1}{2}}\|f\|_{L_{(1,1)}} \\
				\left(\sum\limits_{k_2=1}^{N_2}\sum\limits_{k_1=1}^{N_1} 
				(a_{k_1,k_2}^{*_1,*_2})^{2} \right)^\frac{1}{2}\leq \|f\|_{L_{(2,2)}}, 
			\end{aligned}
		\end{equation}
        where $L_{(p_1,p_2)}$ is the Lebesgue space with the mixed metric (see \eqref{4.3})
	\end{lemma}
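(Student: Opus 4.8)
The plan is to reduce the four inequalities to one–dimensional Bessel inequalities applied separately in each variable, to combine these with the pointwise bounds coming from $\|\varphi_{m_1}\|_\infty\le M_1$ and $\|\psi_{m_2}\|_\infty\le M_2$, and to pass to the rearranged coefficients using the monotonicity of the nonincreasing rearrangement together with the generalized Minkowski inequality of Lemma~\ref{mink}. Introduce the two partial transforms $b_{k_1}(x_2)=\int_0^1 f(x_1,x_2)\overline{\varphi_{k_1}(x_1)}\,dx_1$ and $c_{k_2}(x_1)=\int_0^1 f(x_1,x_2)\overline{\psi_{k_2}(x_2)}\,dx_2$, so that $a_{k_1,k_2}$ is the $\Psi$–coefficient of $b_{k_1}$ and the $\Phi$–coefficient of $c_{k_2}$. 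Each variable then supplies two elementary ingredients: Bessel's inequality, in the forms $\sum_{k_1}|b_{k_1}(x_2)|^2\le\int_0^1|f(x_1,x_2)|^2\,dx_1$ and $\sum_{k_2}|c_{k_2}(x_1)|^2\le\int_0^1|f(x_1,x_2)|^2\,dx_2$, and the uniform bounds $|a_{k_1,k_2}|\le M_2\|b_{k_1}\|_{L_1}$ and $|a_{k_1,k_2}|\le M_1\|c_{k_2}\|_{L_1}$, both of which follow from the boundedness of the systems.

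Two of the four estimates are immediate. Since the successive rearrangement preserves the distribution of the array, $\sum_{k_2\le N_2}\sum_{k_1\le N_1}(a_{k_1,k_2}^{*_1,*_2})^2\le\sum_{k_1,k_2}|a_{k_1,k_2}|^2$, and Bessel in both variables bounds the right–hand side by $\|f\|_{L_{(2,2)}}^2$; this yields the fourth inequality. For the third, the uniform bound gives $|a_{k_1,k_2}|\le M_1M_2\|f\|_{L_{(1,1)}}$ for every index, hence $a_{k_1,k_2}^{*_1,*_2}\le M_1M_2\|f\|_{L_{(1,1)}}$, and summing over the $N_1N_2$ indices of the block produces the factor $(N_1N_2)^{1/2}$.

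The first inequality is the model computation for a mixed case. Writing the block sum with the inner summation in $k_2$ (the index carried by the outer rearrangement $*_2$), for each fixed $k_1$ the family $\{a_{k_1,k_2}^{*_1,*_2}\}_{k_2}$ is the nonincreasing rearrangement of $\{a_{k_1,k_2}^{*_1}\}_{k_2}$, so the sum of its $N_2$ largest squares is at most $N_2(\max_{k_2}a_{k_1,k_2}^{*_1})^2$; thus $\sum_{k_2\le N_2}\sum_{k_1\le N_1}(a_{k_1,k_2}^{*_1,*_2})^2\le N_2\sum_{k_1\le N_1}(\max_{k_2}a_{k_1,k_2}^{*_1})^2$. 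Because the bound $|a_{k_1,k_2}|\le M_2\|b_{k_1}\|_{L_1}$ is independent of $k_2$, monotonicity of the rearrangement in $k_1$ gives $a_{k_1,k_2}^{*_1}\le M_2(\|b_{\,\cdot\,}\|_{L_1})^{*}_{k_1}$ uniformly in $k_2$, whence $\sum_{k_1\le N_1}(\max_{k_2}a_{k_1,k_2}^{*_1})^2\le M_2^2\sum_{k_1}\|b_{k_1}\|_{L_1}^2$. Finally the generalized Minkowski inequality (Lemma~\ref{mink}) followed by Bessel in $x_1$ gives $(\sum_{k_1}\|b_{k_1}\|_{L_1}^2)^{1/2}\le\int_0^1(\sum_{k_1}|b_{k_1}(x_2)|^2)^{1/2}dx_2\le\int_0^1(\int_0^1|f|^2\,dx_1)^{1/2}dx_2=\|f\|_{L_{(2,1)}}$, which is exactly the first inequality.

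The second inequality is the mirror image, with the variables $x_1,x_2$, the constants $M_1,M_2$, and the truncations $N_1,N_2$ interchanged, and with Bessel now applied in $x_2$. Here the useful uniform input is the per–row bound $(\sum_{k_2}|a_{k_1,k_2}|^2)^{1/2}\le\|b_{k_1}\|_{L_2}\le M_1\|f\|_{L_{(1,2)}}$, valid for every fixed $k_1$ since $|b_{k_1}(x_2)|\le M_1\int_0^1|f(x_1,x_2)|\,dx_1$ pointwise; this step already produces the stated norm, so the obstacle is not the norm bound itself but the rearrangement bookkeeping. Indeed, the crux — the step I expect to require the most care — is that after the first rearrangement $*_1$ the rows over which one must sum are the rows of $a^{*_1}$ (the column–sorted array) rather than the rows of $a$, and their $\ell_2$–norms are a priori controlled only by the transposed mixed norm $(\int_0^1(\int_0^1|f|^2\,dx_2)^{1/2}dx_1)$, which in general exceeds $\|f\|_{L_{(1,2)}}$. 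The main difficulty is therefore to arrange the reduction so that the $L^\infty$/maximum direction $k_1$ is taken against the matching order of the successive rearrangement — the convention recorded in the Remark following \eqref{e8} — transferring the per–row control $\sum_{k_2}|a_{k_1,k_2}|^2\le M_1^2\|f\|_{L_{(1,2)}}^2$ through the rearrangement with the correct factor $N_1$, and checking that the concluding generalized Minkowski step returns the intended mixed norm and not its transpose. Once this order–matching is secured, the remaining estimates are identical to those used for the first inequality.
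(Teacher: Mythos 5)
Your arguments for the first, third and fourth inequalities are correct, and they use the same ingredients as the paper: the pointwise bound $|a_{k_1k_2}|\le M_2\int_0^1|b_{k_1}(x_2)|\,dx_2$ producing the factor $N_2^{1/2}$, the generalized Minkowski inequality of Lemma~\ref{mink}, and Bessel. The only difference is organizational: the paper first replaces the block sum of the iterated rearrangement by a sum $\sum_{m_1\in e}\sum_{m_2\in\omega(m_1)}|a_{m_1m_2}|^2$ over a set $e$ of $N_1$ first-indices with sets $\omega(m_1)$ of $N_2$ second-indices, and then estimates that structured sum, whereas you estimate the rearranged array directly.

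The second inequality, however, you never prove: your final paragraph only names the obstruction (``order-matching'', the transposed norm) and defers it, and this is exactly where the content of the lemma lies. Worse, the gap cannot be closed under the convention you fixed, namely reading $a^{*_1,*_2}$ literally as ``rearrange in $k_1$ first, then in $k_2$'': with that reading the second inequality is simply false. Take $\varphi_m(x)=\psi_m(x)=e^{2\pi i m x}$ (so $M_1=M_2=1$) and $f(x_1,x_2)=\sum_{m=1}^{M}e^{2\pi i m(x_1+x_2)}$, so that $a_{m_1m_2}=\delta_{m_1m_2}$ for $m_1,m_2\le M$. Rearranging first in $m_1$ places a $1$ at the top of each of the first $M$ columns, and the subsequent rearrangement in $m_2$ leaves all $M$ ones in the first row; hence with $N_1=1$, $N_2=M$ the left-hand side equals $M^{1/2}$, while $\|f\|_{L_{(1,2)}}=\int_0^1\bigl|\sum_{m\le M}e^{2\pi i m t}\bigr|\,dt\asymp\log M$, so the claimed bound $M^{1/2}\le M_1N_1^{1/2}\|f\|_{L_{(1,2)}}$ fails for large $M$. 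The statement is only tenable with the opposite convention --- the one recorded in the Remark after \eqref{e8} and used in Lemma~\ref{Le2} and Theorem~\ref{Te3}, where the rearrangement is taken in the order reversed relative to the summation (the quantity is really $a^{*_2,*_1}$) --- and under that convention the proof of the second inequality is \emph{not} a mirror image of the first, which is precisely why your mirrored attempt kept producing the transposed norm $\int_0^1(\int_0^1|f|^2\,dx_2)^{1/2}dx_1$. The paper's route is different in kind: it bounds the block sum by a sum over $N_1$ complete rows of the \emph{original} coefficient array, applies Bessel in $x_2$ along each such row (over all $m_2$), and only afterwards uses $|b_{m_1}(x_2)|\le M_1\int_0^1|f(x_1,x_2)|\,dx_1$; applying Bessel before the sup bound is what yields $\|f\|_{L_{(1,2)}}$ rather than its transpose. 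You correctly sensed that the reduction to full rows is the crux --- note that the paper itself only asserts this reduction rather than proving it --- but your proposal neither performs that step nor can do so under the convention it adopts, so the second inequality remains unproven.
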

	Notice that bounded sets $e \in \mathbb{Z}$: $|e|=N_1$ and for any $m_1 \in e$ exist $\omega(m_1)\subset \mathbb{Z}$ that $|\omega(k_1)|=N_2$ and \\
	\begin{equation*}
		\begin{aligned}
			\left(\sum\limits_{k_2=1}^{N_2}\sum\limits_{k_1=1}^{N_1} 
			(a_{k_1,k_2}^{*_1,*_2})^{2} \right)^{\frac12}=\left( \sum_{m_1\in e}\sum_{m_2 \in \omega(m_1)}|a_{m_1,m_2}|^2\right)^{\frac12} \\
			=\left(\sum_{m_1\in e}\sum_{m_2 \in \omega(m_1)}\left| \int_0^{1}\int_0^{1} f(x_1, x_2)\psi_{m_1}(x_1)dx_1\phi_{m_2}(x_2)dx_2 \right|^2\right)^\frac{1}{2}\\
			\leq M_2 \left(\sum_{m_1\in e}\sum_{m_2 \in \omega(m_1)}\left(\int_0^{1}\left| \int_0^{1} f(x_1, x_2)\psi_{m_1}(x_1)dx_1\right|dx_2\right)^2\right)^\frac{1}{2}\\
			=M_2 \,N_2^\frac{1}{2}\int_0^{1}\left(\sum_{m_i \in e}\left|\int_0^{1}f(x_1, x_2)\psi_{m_1}(x_1)dx_1\right|^2\right)^\frac{1}{2}dx_2\leq M_2\,N_2^\frac{1}{2}\|f\|_{L_{(2,1)}}.
		\end{aligned}
	\end{equation*}
	and
	\begin{equation*}
		\begin{aligned}
			\left(\sum_{m_1\in e}\sum_{m_2 \in \omega(m_1)}\left| \int_0^{1}\int_0^{1} f(x_1, x_2)\psi_{m_1}(x_1)dx_1\phi_{m_2}(x_2)dx_2 \right|^2\right)^\frac{1}{2}\\
			\left(\sum_{m_1\in e}\sum_{m_2 \in\Bbb Z}\left| \int_0^{1}\int_0^{1} f(x_1, x_2)\psi_{m_1}(x_1)dx_1\phi_{m_2}(x_2)dx_2 \right|^2\right)^\frac{1}{2}\\
			=\left(\sum_{m_1\in e}\left(\int_0^{1} \left|\int_0^{1}f(x_1, x_2)\psi_{m_1}(x_1)dx_1\right|^{2}\right)dx_2\right)^{\frac{1}{2}}\\
			\leq M_1 N^{\frac{1}{2}}\left(\int_0^{1}\left(\int_0^{1}|f(x_1, x_2)|dx_1\right)^2dx_2\right)^{\frac{1}{2}}=M_1 N_1^{\frac{1}{2}}\|f\|_{L_{(1,2)}}
		\end{aligned}
	\end{equation*} 
	It is clear that
	$$
	\left(\sum_{m_1\in e}\sum_{m_2 \in \omega(m_1)}\left| \int_0^{1}\int_0^{1} f(x_1, x_2)\psi_{m_1}(x_1)dx_1\phi_{m_2}(x_2)dx_2 \right|^2\right)^\frac{1}{2}\leq M_1 M_2|e|^\frac{1}{2}|\omega_k|^\frac{1}{2}\|f\|_{L_{(1,1)}}
	$$
	$$
	=M_1M_2(N_1N_1)^\frac12 \|f\|_{L_{(1,1)}}
	$$
	$$
	\left(\sum_{m_1\in e}\sum_{m_2 \in \omega(m_1)}\left| \int_0^{1}\int_0^{1} f(x_1, x_2)\psi_{m_1}(x_1)dx_1\phi_{m_2}(x_2)dx_2 \right|^2\right)^\frac{1}{2}\leq\|f\|_{L_{(2,2)}}
	$$ follows from Parseval's inequality. 
	\vskip0.5cm
	\begin{lemma}\label{Le2}
		Let $0<\overline{q}=(q_1,q_2)\leq\infty, \overline{\theta}=(\theta_1,\theta_2)\in(0,1)^2$ and $f\in (L_{(1,1)}[0,1]^2, L_{{2,2}}[0,1]^2))_{\bar \theta,\bar q}$. let $f \sim \sum_{k_2 \in \mathbb{N} }\sum_{ k_1 \in \mathbb{N}} a_{k_1,k_2}[0,1]^2 \phi_{m_1}(x_1)\psi_{m_2}(x_2)$ then the following inequality holds: 
		\begin{gather}
			\left(\sum_{k_2 =0}^\infty\left(\sum_{k_1=0}^\infty\left( 2^{\frac{k_1}{p'_1}+\frac{k_2}{p'_2}}\left[\frac{1}{2^{k_1+k_2}} \sum_{m_2=1}^{2^{k_2}} \sum_{m_1=1}^{2^{k_1}}\left(a_{m_1 m_2}^{*_2, *_1}(f)\right)^2\right]^{\frac{1}{2}}\right)^{q_1} \right)^{\frac{q_2}{q_1}}\right)^{\frac{1}{q_2}}\leq c\|f\|_{(L_{(1,1)}, L_{{(2,2)}})_{\bar \theta,\bar q}},
		\end{gather}
		where $\theta_1, \theta_2$ and $\frac{1}{p_i}=1-\frac{\theta_1}{2} \quad i=1,2$ are  the corresponding constant does not depend on the parameters.
	\end{lemma}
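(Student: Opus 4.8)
The plan is to realize the four inequalities of Lemma~\ref{Le3} as the boundedness, at the four vertices of a mixed-metric pair, of a single sublinear operator, and then to interpolate by Lemma~\ref{Le0}(ii). For a double sequence $a=\{a_{m_1m_2}\}$ put
\[
S(N_1,N_2;a)=\Bigl(\sum_{m_2=1}^{N_2}\sum_{m_1=1}^{N_1}\bigl(a^{*_2,*_1}_{m_1m_2}\bigr)^2\Bigr)^{1/2},
\]
and define $Tf=\bigl(S(2^{k_1},2^{k_2};a(f))\bigr)_{k_1,k_2\ge0}$, the array of dyadic partial $\ell_2$-sums of the Fourier coefficients of $f$. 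First I would check that $T$ is sublinear: by the block realization recorded after Lemma~\ref{Le3}, $S(N_1,N_2;a)$ is the $\ell_2$-mass of $a$ over an admissible index block of shape $(N_1,N_2)$, so the $\ell_2$-triangle inequality on that fixed block gives $S(N_1,N_2;a+b)\le S(N_1,N_2;a)+S(N_1,N_2;b)$ for every $(N_1,N_2)$; together with the obvious homogeneity this makes $T$ quasi-linear, as required below.

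Next I would introduce the weighted $\ell_\infty$ mixed-metric pair $(\mathbf B_0,\mathbf B_1)$ whose vertices carry the quasi-norms
\[
\|c\|_{\mathbf B_\varepsilon}=\sup_{k_1,k_2\ge0}2^{-(1-\varepsilon_1)k_1/2}\,2^{-(1-\varepsilon_2)k_2/2}\,|c_{k_1k_2}|,\qquad \varepsilon\in E.
\]
With this choice the four inequalities of Lemma~\ref{Le3} say precisely that $T\colon L_{(1+\varepsilon_1,\,1+\varepsilon_2)}\to\mathbf B_\varepsilon$ with norm at most $M_1^{\,1-\varepsilon_1}M_2^{\,1-\varepsilon_2}$; that is, $T$ carries each vertex of the pair $(L_{(1,1)},L_{(2,2)})$ into the corresponding vertex of $(\mathbf B_0,\mathbf B_1)$. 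Lemma~\ref{Le0}(ii) then yields
\[
T\colon (L_{(1,1)},L_{(2,2)})_{\bar\theta,\bar q}\longrightarrow(\mathbf B_0,\mathbf B_1)_{\bar\theta,\bar q},\qquad \|T\|\le M_1M_2 .
\]
Since the right-hand side of the lemma is exactly the norm in $(L_{(1,1)},L_{(2,2)})_{\bar\theta,\bar q}$ (the space identified with $L_{\bar p,\bar q}$ in Theorem~\ref{Te1}), no further conversion of the domain is needed.

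It remains to bound $\|Tf\|_{(\mathbf B_0,\mathbf B_1)_{\bar\theta,\bar q}}$ from below by the left-hand side, and this is the step I expect to be the main obstacle. For a weighted $\ell_\infty$ pair the Peetre functional has an elementary lower bound obtained by testing a single index: for any $c=\sum_{\varepsilon\in E}c_\varepsilon$ and any fixed $(k_1,k_2)$,
\[
|c_{k_1k_2}|\le\sum_{\varepsilon\in E}2^{(1-\varepsilon_1)k_1/2}2^{(1-\varepsilon_2)k_2/2}\|c_\varepsilon\|_{\mathbf B_\varepsilon},
\]
and the substitution $t_i=2^{-k_i/2}$ turns the right-hand side into $(t_1t_2)^{-1}\sum_{\varepsilon\in E} t^\varepsilon\|c_\varepsilon\|_{\mathbf B_\varepsilon}$; taking the infimum over decompositions gives $K(t_1,t_2;Tf)\ge 2^{-(k_1+k_2)/2}S(2^{k_1},2^{k_2};a(f))$ at $t_i=2^{-k_i/2}$. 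Inserting this into the integral defining $\|Tf\|_{(\mathbf B_0,\mathbf B_1)_{\bar\theta,\bar q}}$, restricting the integration to $0<t_1,t_2\le1$ and discretizing along $t_i=2^{-k_i/2}$ (legitimate because $K$ is nondecreasing in each variable, with a doubling estimate for $S(N_1,N_2)$ to pass between integral and sum), and using the relation $1/p_i'=\theta_i/2$ forced by $1/p_i=1-\theta_i/2$, the weight $t_1^{-\theta_1}t_2^{-\theta_2}2^{-(k_1+k_2)/2}$ becomes $2^{k_1(1/p_1'-1/2)+k_2(1/p_2'-1/2)}$, which is exactly the factor $2^{k_1/p_1'+k_2/p_2'}\bigl[2^{-(k_1+k_2)}\bigr]^{1/2}$ of the left-hand side. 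Combining the three displays gives the claim. The delicate points are thus the block realization underlying the sublinearity of $S$ and the careful passage from the continuous interpolation integral to the dyadic double sum.
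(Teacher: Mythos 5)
Your route is genuinely different from the paper's: the paper never invokes Lemma~\ref{Le0}(ii) here, but instead takes an arbitrary decomposition $f=f_{00}+f_{10}+f_{01}+f_{11}$, applies Lemma~\ref{Le3} to each piece, recognizes the resulting bound as $K\bigl(2^{k_1/2},2^{k_2/2};f;L_{(2,2)},L_{(1,1)}\bigr)$, sums over dyadic $(k_1,k_2)$, and finishes with the symmetry property of Lemma~\ref{Le0}(iii). You instead package the four vertex estimates of Lemma~\ref{Le3} as boundedness of a coefficient operator $T$ into a weighted $\ell_\infty$ mixed-metric pair, interpolate by Lemma~\ref{Le0}(ii), and then recover the left-hand side by a lower bound on the target $K$-functional plus discretization. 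Those last two steps are correct: the single-index test does give $K(t_1,t_2;Tf)\ge t_1t_2\,|(Tf)_{k_1k_2}|$ at $t_i=2^{-k_i/2}$, the identification $1/p_i'=\theta_i/2$ reproduces exactly the weight $2^{k_1/p_1'+k_2/p_2'}\bigl[2^{-(k_1+k_2)}\bigr]^{1/2}$, and the passage from the integral to the dyadic sum is routine (monotonicity of $K$, edge terms absorbed).

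The genuine gap is the sublinearity of $T$. First, even granting the block realization, the triangle inequality on the block $E$ realizing $S(N_1,N_2;a+b)$ only gives $\|(a+b)\chi_E\|_{\ell_2}\le\|a\chi_E\|_{\ell_2}+\|b\chi_E\|_{\ell_2}$; to conclude you also need the \emph{maximality} of $S$ over all admissible blocks, i.e. $\|a\chi_E\|_{\ell_2}\le S(N_1,N_2;a)$, which you never state. Second, and more seriously, the realization identity itself (asserted without proof after Lemma~\ref{Le3}) is false for iterated rearrangements: take $b_{11}=2$, $b_{12}=1$, $b_{21}=3$, $b_{22}=0$ and $N_1=1$, $N_2=2$; then for either order of iterated rearrangement the partial sum over the rectangle equals $3+1=4$, while every block $\{m_1\}\times\omega(m_1)$ with $|\omega(m_1)|=2$ carries mass at most $3$. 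Only the one-sided inequality (block mass $\le$ iterated partial sum) holds in general, so $S(N_1,N_2;\cdot)$ is \emph{not} a supremum of $\ell_2$-masses over such blocks, and your exact subadditivity is unsupported. The repair is the dilation-type quasi-subadditivity that the paper's own proof of Lemma~\ref{Le2} uses: applying the one-dimensional inequality $(u+v)^*(i+j-1)\le u^*(i)+v^*(j)$ in each variable (together with monotonicity of rearrangement) gives $(a+b)^{*_2,*_1}(2k_1-1,2k_2-1)\le a^{*_2,*_1}(k_1,k_2)+b^{*_2,*_1}(k_1,k_2)$, whence, using that $S$ is nondecreasing in $(N_1,N_2)$, one gets $|T(f+g)|\le c\,(|Tf|+|Tg|)$ pointwise with an absolute constant $c>1$. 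This is quasi-linearity, which is all that Lemma~\ref{Le0}(ii) requires, so your argument goes through after this substitution; but as written, the step you dismissed as obvious is exactly the delicate point.
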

	\begin{proof}
	Let $f=f_{00}+f_{10}+f_{01}+f_{11}$ be an arbitrary decomposition of the function $f$, where $f_{00}\in L_{(1,1)}(\mathbb{T}^2),f_{10}\in L_{(2,1)}(\mathbb{T}^2),f_{01}\in L_{(1,2)}(\mathbb{T}^2), f_{11}\in L_{(2,2)}(\mathbb{T}^2)$, then
		$$
		\left(\sum_{m_2=1}^{2^{k_2}}\sum_{m_1=1}^{2^{k_1}}\left(a_{m_1, m_2}^{*_1, *_2}(f)\right)^2\right)^{1 /2}
	$$
	$$
	\leq2\left( \left(\sum_{m_2=1}^{2^{k_2-1}}\sum_{m_1=1}^{2^{k_1-1}}\left(a_{m_1, m_2}^{*_2, *_1}\left(f_{00}\right)\right)^2\right)^{1 /2}+\left(\sum_{m_2=1}^{2^{k_2-1}}\sum_{m_1=1}^{2^{k_1-1}}\left(a_{m_1, m_2}^{*_2, *_1}\left(f_{10}\right)\right)^2\right)^{1 /2}\right. 
	$$
	$$
	\left. +\left(\sum_{1}^{2^{k_2-1}}\sum_{1}^{2^{k_1-1}}\left(a_{m_1, m_2}^{*_2, *_1}\left(f_{01}\right)\right)^2\right)^{1 /2}+\left(\sum_{1}^{2^{k_2-1}}\sum_{1}^{2^{k_1-1}}\left(a_{m_1, m_2}^{*_2, *_1}\left(f_{11}\right)\right)^2\right)^{1 /2}\right) 
	$$
Next, applying Lemma~\ref{Le3}, we obtain
	$$
	\lesssim 2^{\frac{k_1+k_2}2}\|f_{00}\|_{L_{(1,1)}} +2^{\frac{k_1}2}\|f_{01}\|_{L_{(1,2)}} +2^{\frac{k_2}2}\|f_{10}\|_{L_{(2,1)}} +\|f_{11}\|_{L_{(2,2)}}
	$$
	
Taking into account the arbitrariness of the decomposition   $f=f_{00}+f_{10}+f_{01}+f_{11}$  we have
	$$
	\left(\sum_{m_2=1}^{2^{k_2}}\sum_{m_1=1}^{2^{k_1}}\left(a_{m_1, m_2}^{*_2, *_1}(f)\right)^2\right)^{1 /2}
	\le K\left(2^{\frac{k_1}2},2^{\frac{k_2}2};f\;;L_{(2,2)}, L_{(1,1)} \right) 
	$$
Therefore we obtain
	$$
	\left( \sum_{k_2=0}^\infty\left( \sum_{k_1=0}^\infty\left(2^{k_1(\frac1{p'_1}-\frac12)+(\frac1{p'_2}-\frac12)}
	K\left(2^{\frac{k_1}2},2^{\frac{k_2}2};f\;;L_{(2,2)}, L_{(1,1)} \right)\right) ^{q_1 } \right) ^{\frac{q_2}{q_1}}\right) ^{1/q_2}
	$$
	$$
	=\left( \sum_{k_2=0}^\infty\left( \sum_{k_1=0}^\infty\left(2^{-\frac{(1-\theta_1)}2-\frac{(1-\theta_2)}2}
	K\left(2^{\frac{k_1}2},2^{\frac{k_2}2};f\;;L_{(2,2)}, L_{(1,1)} \right)\right) ^{q_1 } \right) ^{\frac{q_2}{q_1}}\right) ^{1/q_2}
	$$
	$$
	\asymp \left( \int_1^\infty\left( \int_1^\infty\left(t_1^{-(1-\theta_1)}t_2^{-(1-\theta_2)}
	K\left(t_1,t_2;f\;;L_{(2,2)}, L_{(1,1)} \right)\right) ^{q_1 }\frac{dt_1}{t_1} \right) ^{\frac{q_2}{q_1}}\frac{dt_2}{t_2}\right) ^{1/q_2}
	$$
	$$
	\leq \|f\|_{(L_{(2,2)}, L_{{(1,1)}})_{\overline{1- \theta},\bar q}}= \|f\|_{(L_{(1,1)}, L_{{(2,2)}})_{\bar \theta,\bar q}}
	$$
Here we have used Lemma \ref{Le0}. 
\end{proof}
		\begin{theorem}\label{Te3}
		Let $0<\overline{q}=(q_1,q_2)\leq\infty, \overline{\theta}=(\theta_1,\theta_2)\in(0,1)^2$, $\beta_i=\max\{\frac12, \frac1{q_i}\},\;\frac{1}{p_i}=1-\frac{\theta_1}{2} \quad i=1,2$ and $f\in L_{\bar p,\bar q}$. let $f \sim \sum_{k_2 \in \mathbb{N} }\sum_{ k_1 \in \mathbb{N}} a_{k_1,k_2} \phi_{m_1}(x_1)\psi_{m_2}(x_2)$ then the following inequality holds: 
		\begin{gather}\label{e9}
			\left(\sum_{k_2 =0}^\infty\left(\sum_{k_1 =0}^\infty \left( 2^{\frac{k_1}{p'_1}+\frac{k_2}{p'_2}}\left[\frac{1}{2^{k_1+k_2}} \sum_{m_2=1}^{2^{k_2}} \sum_{m_1=1}^{2^{k_1}}\left(a_{m_1 m_2}^{*_2, *_1}(f)\right)^2\right]^{\frac{1}{2}}\right)^{q_1} \right)^{\frac{q_2}{q_1}}\right)^{\frac{1}{q_2}}\leq C(\theta_1,\theta_2, \bar\beta)\|f\|_{L_{\bar p,\bar q}[0,1]^2},
		\end{gather}
	where the constant $C(\theta_1,\theta_2,\bar \beta)$, independent of $\theta_1$ and $\theta_2$, satisfies relation~\eqref{const}.
	\end{theorem}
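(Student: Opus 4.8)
The plan is to obtain \eqref{e9} as an immediate consequence of the two ingredients already established, namely Lemma~\ref{Le2} together with the interpolation identity of Theorem~\ref{Te1}: one composes the discrete coefficient estimate with the description of $L_{\bar p,\bar q}$ as an anisotropic interpolation space.

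First I would apply Lemma~\ref{Le2}. Since $\overline{\theta}\in(0,1)^2$, $0<\bar q\le\infty$, and the exponents are tied by $\frac{1}{p_i}=1-\frac{\theta_i}{2}$, its hypotheses are satisfied, and it bounds the left-hand side of \eqref{e9} by $c\,\|f\|_{(L_{(1,1)},L_{(2,2)})_{\bar\theta,\bar q}}$. The decisive point here is that the constant $c$ furnished by Lemma~\ref{Le2} is absolute, i.e.\ independent of $\theta_1,\theta_2$ and of $\bar q$.

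Next I would invoke Theorem~\ref{Te1} with $L_{\bar 1}=L_{(1,1)}$ and $L_{\bar 2}=L_{(2,2)}$. The identity \eqref{int0} guarantees that $f\in L_{\bar p,\bar q}$ indeed belongs to the interpolation space $(L_{(1,1)},L_{(2,2)})_{\bar\theta,\bar q}$, and the quantitative estimate \eqref{int} gives $\|f\|_{(L_{(1,1)},L_{(2,2)})_{\bar\theta,\bar q}}\le 6\,D(\theta_1,\theta_2)\,\|f\|_{L_{\bar p,\bar q}}$, with $D(\theta_1,\theta_2)$ as in \eqref{const}. Chaining the two bounds yields \eqref{e9} with $C(\theta_1,\theta_2,\bar\beta)=6c\,D(\theta_1,\theta_2)$, whose dependence on $\theta_1,\theta_2$ and on $\bar\beta=(\beta_1,\beta_2)$ is exactly that of \eqref{const}.

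The concatenation itself is routine; the only delicate issue is the bookkeeping of constants. Asserting the sharp dependence of $C$ on the parameters relies entirely on the constant in Lemma~\ref{Le2} being parameter-free, so that all blow-up as $\theta_i\to 0^+$ or $\theta_i\to 1^-$ is produced by the interpolation constant $D(\theta_1,\theta_2)$ alone. I would therefore re-inspect the proof of Lemma~\ref{Le2}, checking that the passage through the Peetre $K$-functional and the symmetry identity of Lemma~\ref{Le0}(iii) introduce no hidden $\theta$- or $\bar q$-dependence; verifying this is the main obstacle to claiming the precise constant.
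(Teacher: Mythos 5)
Your proposal is correct and is exactly the paper's argument: the authors prove Theorem~\ref{Te3} by composing Lemma~\ref{Le2} (whose constant is indeed independent of $\theta_1,\theta_2$) with the quantitative interpolation estimate \eqref{int} of Theorem~\ref{Te1}, so all the $\theta$-dependence comes from $D(\theta_1,\theta_2)$ in \eqref{const}. Your extra care about verifying that Lemma~\ref{Le2} introduces no hidden parameter dependence is a sound check, but it raises no issue beyond what the paper's one-line proof already relies on.
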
	
\begin{proof}
The statement follows immediately from Lemma~\ref{Le2} and Theorem~\ref{Te1}.
	\end{proof}
	\begin{theorem}\label{Te4} Let $0<\bar q \leq\infty, \;\beta_i=\max\{\frac12, \frac1{q_i}\}, \; \theta_i=\lambda _i-\beta _i\geq 0, \; i=1,2\}$ and let $f \sim \sum_{k_2 \in \mathbb{N} }\sum_{ k_1 \in \mathbb{N}} a_{k_1,k_2} \phi_{m_1}(x_1)\psi_{m_2}(x_2)$. If $f\in G^{\overline\theta}L_{\overline2,\overline q}$, то  $a\in G^{\overline\lambda}\ell^{*}_{\overline2,\overline q} $and the inequality
		\[
		\|a\|_{G^{\bar\lambda}\ell^{*}_{\bar2,\bar q}} \lesssim \|f\|_{G^{\bar \theta}L_{\bar 2,\bar  q}}
		\]
holds. In particular,
		\[
		\|a\|_{G^{\bar\beta}\ell^{*}_{\bar2,\bar q}} \lesssim \|f\|_{L_{\bar 2,\bar  q}}.
		\]
	\end{theorem}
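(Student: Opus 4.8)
The plan is to deduce Theorem~\ref{Te4} from the off-endpoint estimate of Theorem~\ref{Te3} by an extrapolation as $\bar p\to\bar 2$, the whole point being that Theorem~\ref{Te1} supplies the \emph{exact} rate at which the constant degenerates at the endpoint. Both grand quasi-norms in the statement are, by definition, suprema over an internal parameter $\bar\varepsilon=(\varepsilon_1,\varepsilon_2)$, and by the truncation property~(P.4) this supremum may be restricted to $0<\varepsilon_i\le\delta_i<1/2$; writing
\[
\|a\|_{G^{\bar\lambda}\ell^{*}_{\bar 2,\bar q}}
\asymp\sup_{0<\varepsilon_i\le\delta_i}
\varepsilon_1^{\lambda_1}\varepsilon_2^{\lambda_2}\,S(\bar\varepsilon),
\]
where $S(\bar\varepsilon)$ denotes the inner double series of~\eqref{e8} with base exponents $1/p_i=1/2$, it suffices to establish the bound $\varepsilon_1^{\lambda_1}\varepsilon_2^{\lambda_2}S(\bar\varepsilon)\lesssim\|f\|_{G^{\bar\theta}L_{\bar 2,\bar q}}$ uniformly in $\bar\varepsilon$, and then take the supremum.

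First I fix such an $\bar\varepsilon$ and choose the interpolation parameters of Theorem~\ref{Te3} to be $\theta_i=1-2\varepsilon_i\in(0,1)$, which stay bounded away from $0$ because $\varepsilon_i\le\delta_i<1/2$. With this choice the mixed-metric exponents given by $1/p_i=1-\theta_i/2=1/2+\varepsilon_i$ are exactly those of the $\bar\varepsilon$-shifted Lorentz norm occurring in the definition of $\|f\|_{G^{\bar\theta}L_{\bar 2,\bar q}}$, and the dyadic weight $2^{k_i/p_i'}$ on the left-hand side of Theorem~\ref{Te3} is the weight present in $S(\bar\varepsilon)$. Hence $S(\bar\varepsilon)$ is precisely the left-hand side of Theorem~\ref{Te3} for this parameter, and that theorem (via Lemma~\ref{Le2}) yields
\[
S(\bar\varepsilon)\le C(\theta_1,\theta_2,\bar\beta)\,\|f\|_{L_{\bar p(\bar\varepsilon),\bar q}},
\qquad \tfrac1{p_i(\bar\varepsilon)}=\tfrac12+\varepsilon_i .
\]

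The next step is to control the two sources of $\bar\varepsilon$-dependence. As $\theta_i\to1$, i.e.\ $\varepsilon_i\to0$, the constant~\eqref{const} is dominated by its last entry, so that
\[
C(\theta_1,\theta_2,\bar\beta)\asymp
(1-\theta_1)^{-\beta_1}(1-\theta_2)^{-\beta_2}
=(2\varepsilon_1)^{-\beta_1}(2\varepsilon_2)^{-\beta_2}
\asymp\varepsilon_1^{-\beta_1}\varepsilon_2^{-\beta_2};
\]
it is here that the sharp power $\beta_i=\max\{1/2,1/q_i\}$ recorded in Theorem~\ref{Te1} is indispensable. Simultaneously, the defining supremum of the grand space gives, for the single value $\bar\varepsilon$,
\[
\|f\|_{L_{\bar p(\bar\varepsilon),\bar q}}
\le \varepsilon_1^{-\theta_1}\varepsilon_2^{-\theta_2}\,\|f\|_{G^{\bar\theta}L_{\bar 2,\bar q}} .
\]
Combining the three displays and using $\lambda_i=\theta_i+\beta_i$, the powers of $\varepsilon_i$ cancel exactly, giving $\varepsilon_1^{\lambda_1}\varepsilon_2^{\lambda_2}S(\bar\varepsilon)\lesssim\|f\|_{G^{\bar\theta}L_{\bar 2,\bar q}}$ with a constant independent of $\bar\varepsilon$. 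Taking the supremum proves the general inequality; the displayed special case is the instance $\bar\theta=\bar 0$, $\bar\lambda=\bar\beta$, together with the identity $G^{\bar 0}L_{\bar 2,\bar q}=L_{\bar 2,\bar q}$.

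I expect the decisive step to be the constant bookkeeping of the third paragraph: the argument succeeds only because the blow-up in Theorem~\ref{Te3} occurs with the exact exponents $\beta_i$ and no worse, which is exactly the improvement over~\cite{N2000Izv} isolated in the Remark after Theorem~\ref{Te1}. A second point requiring care is that the last term of~\eqref{const} really is the dominant one as $\theta_1,\theta_2\to1$ simultaneously (the mixed entries carry a bounded factor $1/\theta_j$ in that regime), so that the two-parameter extrapolation can be run in both variables at once; the truncation $\varepsilon_i\le\delta_i<1/2$, allowed by~(P.4), is precisely what keeps each $\theta_i$ away from $0$ and makes this legitimate.
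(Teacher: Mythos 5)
Your proof is correct and follows essentially the same route as the paper: fix $\bar\varepsilon$, apply Theorem~\ref{Te3} with $1/p_i=1/2+\varepsilon_i$ (so the interpolation parameter is $1-2\varepsilon_i$), use the sharp blow-up $\varepsilon_1^{-\beta_1}\varepsilon_2^{-\beta_2}$ coming from \eqref{const}, bound the $\bar\varepsilon$-shifted Lorentz norm by $\varepsilon_1^{-\theta_1}\varepsilon_2^{-\theta_2}\|f\|_{G^{\bar\theta}L_{\bar 2,\bar q}}$, multiply by $\varepsilon_1^{\lambda_1}\varepsilon_2^{\lambda_2}$ so the powers cancel, and take the supremum. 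Your truncation of the supremum to $0<\varepsilon_i\le\delta_i<1/2$ via (P.4) is in fact a small tightening of the paper's argument, which allows $0<\varepsilon_i<1$ even though Theorem~\ref{Te3} requires the interpolation parameter $1-2\varepsilon_i$ to lie in $(0,1)$.
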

	\begin{proof}
		Let \( 0<\varepsilon_1,\varepsilon_2 <1 \). Assume that  $\frac{1}{p_1}=\frac{1}{2}+\varepsilon_1$,$\frac{1}{p_2}=\frac{1}{2}+\varepsilon_2$, then from Theorem \ref{Te3} (see \eqref{e9} ,\eqref{const}) we have 
		$$	\left(\sum_{k_2=0}^\infty\left(\sum_{k_1=0}^\infty\left( 2^{-\varepsilon_1{k_1}-\varepsilon_2{k_2}}\left[ \sum_{m_2=1}^{2^{k_2}} \sum_{m_1=1}^{2^{k_1}}\left(a_{m_1 m_2}^{*_2, *_1}(f)\right)^2\right]^{\frac{1}{2}}\right)^{q_1} \right)^{\frac{q_2}{q_1}}\right)^{\frac{1}{q_2}},
		$$
		$$
			\leq  c\varepsilon_1^{-\beta_1} \varepsilon_2^{-\beta_2}  
			\left( \int_0^1\left( \int_0^1 (t_2^{\frac{1}{2} + \varepsilon_2}t_1^{\frac{1}{2} + \varepsilon_1} f^{*_1,*_2}(t_1,t_2))^{q_1} \frac{dt_1}{t_1} \right)^{\frac{q_2}{q_1}}\frac{dt_2}{t_2} \right)^{\frac{1}{q_2}}.    
	$$
		
		Now multiplying both sides by $\varepsilon_1^{\lambda_1}\varepsilon_2^{\lambda_2}$, we get  
		$$
			\varepsilon_1^{\beta_1}\varepsilon_2^{\beta_2}
			\left(\sum_{k_2 =0}^\infty\left(\sum_{k_1=0}^\infty\infty\left( 2^{-\varepsilon_1{k_1}-\varepsilon_2{k_2}}\left[ \sum_{m_2=1}^{2^{k_2}} \sum_{m_1=1}^{2^{k_1}}\left(a_{m_1 m_2}^{*_2, *_1}(f)\right)^2\right]^{\frac{1}{2}}\right)^{q_1} \right)^{\frac{q_2}{q_1}}\right)^{\frac{1}{q_2}}
			$$
			$$
						\leq   c\varepsilon_1^{\theta_1} \varepsilon_2^{\theta_2}  
			\left( \int_0^1\left( \int_0^1 (t_2^{\frac{1}{2} + \varepsilon_2}t_1^{\frac{1}{2} + \varepsilon_1} f^{*_1,*_2}(t_1,t_2))^{q_1} \frac{dt_1}{t_1} \right)^{\frac{q_2}{q_1}}\frac{dt_2}{t_2} \right)^{\frac{1}{q_2}}
			\leq c \|f\|_{G^{\overline\theta}L_{\bar2,\bar q}}.
	$$  
    Taking into account the arbitrariness of the parameter $\overline{\varepsilon} = \varepsilon_1, \varepsilon_2$ we obtain the required  inequality.
	\end{proof}
\begin{theorem}
Let $2\leq \bar q\leq\infty$ and $f\in L_{\bar2,\bar\tau}$. If $f\in L_{\bar p,\bar q}[0,1]^2$ then the following inequality holds: 
$$\sup_{k_2,k_1\in \Bbb N}\frac{1}{(\ln k_1)^{\frac{1}{2}-\frac{1}{q_1}}(\ln k_2)^{\frac{1}{2}-\frac{1}{q_2}}}\left(\sum_{m_2=1}^{k_2} \sum_{m_1=1}^{k_1}\left(a_{m_1 m_2}^{*_2, *_1}(f)\right)^2\right)^\frac{1}{2}\lesssim \|f\|_{L_{\bar2,\bar\tau}},$$ where $\bar k=\max(k,2)$
\end{theorem}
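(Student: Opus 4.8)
The plan is to derive this as the $\bar q$-endpoint specialization of Theorem~\ref{Te4}, by passing from the grand discrete norm to the supremum on the left-hand side. Since $2\le\bar q\le\infty$ we have $\beta_i=\max\{\tfrac12,\tfrac1{q_i}\}=\tfrac12$ for $i=1,2$, so the ``in particular'' assertion of Theorem~\ref{Te4} yields
\[
\|a\|_{G^{\bar\beta}\ell^{*}_{\bar2,\bar q}}\lesssim\|f\|_{L_{\bar2,\bar q}} .
\]
It therefore suffices to bound the left-hand supremum by $\|a\|_{G^{\bar\beta}\ell^{*}_{\bar2,\bar q}}$. I would first reduce the supremum over arbitrary $k_1,k_2\in\mathbb N$ to dyadic scales: setting $N_i=\lceil\log_2 k_i\rceil$, the inner quantity $\sum_{m_2\le k_2}\sum_{m_1\le k_1}(a_{m_1m_2}^{*_2,*_1})^2$ is nondecreasing in $k_1,k_2$ and is controlled by its value at $(2^{N_1},2^{N_2})$, while $\ln\bar k_i\asymp N_i$ with the exponent $\tfrac12-\tfrac1{q_i}\ge0$; the truncation $\bar k=\max(k,2)$ only serves to keep $\ln\bar k_i$ bounded away from $0$ near $k_i=1,2$.

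The heart of the argument is a lower bound for the grand norm obtained by a judicious choice of the extrapolation parameters. With $\bar p=\bar2$ the summand in~\eqref{e8} equals $2^{k_1\varepsilon_1+k_2\varepsilon_2}\big(\sum_{m_2=1}^{2^{k_2}}\sum_{m_1=1}^{2^{k_1}}(a_{m_1m_2}^{*_2,*_1})^2\big)^{1/2}$. Fixing a target pair $(N_1,N_2)$ and using that $\|a\|_{G^{\bar\beta}\ell^{*}_{\bar2,\bar q}}$ is a supremum over $0<\varepsilon_1,\varepsilon_2\le1$, I would take $\varepsilon_i=1/N_i$ and retain in the iterated $\ell^{q_1}(\ell^{q_2})$-sum only the indices in the dyadic window $N_i\le k_i\le 2N_i$. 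On this window $2^{k_i\varepsilon_i}\in[2,4]$, by monotonicity the inner sum is at least its value at $(2^{N_1},2^{N_2})$, and the window contains $\asymp N_i$ indices in the $i$-th variable, so the $\ell^{q_i}$-summation contributes a factor $\asymp N_i^{1/q_i}$ (read as $1$ when $q_i=\infty$). Together with the prefactor $\varepsilon_1^{\beta_1}\varepsilon_2^{\beta_2}=N_1^{-1/2}N_2^{-1/2}$ this gives
\[
\|a\|_{G^{\bar\beta}\ell^{*}_{\bar2,\bar q}}\gtrsim N_1^{-(1/2-1/q_1)}N_2^{-(1/2-1/q_2)}\Big(\sum_{m_2=1}^{2^{N_2}}\sum_{m_1=1}^{2^{N_1}}(a_{m_1m_2}^{*_2,*_1})^2\Big)^{1/2}.
\]
Recalling $N_i\asymp\ln\bar k_i$ and taking the supremum over $(N_1,N_2)$ reproduces exactly the claimed weights $(\ln\bar k_i)^{-(1/2-1/q_i)}$, and combining with the bound from Theorem~\ref{Te4} finishes the proof.

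The main obstacle is precisely this simultaneous optimization: the exponent $\tfrac12-\tfrac1{q_i}$, rather than the cruder $\tfrac12$ that a single-term extraction would give, emerges only from balancing the gain $N_i^{1/q_i}$ of the $\ell^{q_i}$-summation over the dyadic window against the loss $N_i^{-1/2}=\varepsilon_i^{\beta_i}$, and this balancing must be carried out coherently in the two non-symmetric variables of the iterated norm. Particular care is needed at the endpoint $q_i=\infty$, where the $\ell^{q_i}$-sum degenerates to a supremum and the window contributes no power of $N_i$, in agreement with $\tfrac12-\tfrac1\infty=\tfrac12$, and in the general-to-dyadic reduction, where the monotonicity of the partial sums and the convention $\bar k=\max(k,2)$ keep all estimates in the correct direction.
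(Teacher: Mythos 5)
Your proposal is correct and follows essentially the same route as the paper: both deduce the dyadic inequality from Theorem~\ref{Te4} by lower-bounding the grand norm $\|a\|_{G^{\bar\beta}\ell^{*}_{\bar 2,\bar q}}$ with the choice $\varepsilon_i\asymp 1/N_i$, gaining the factor $N_i^{1/q_i}$ from the $\ell^{q_i}$-summation, and then pass to general $k_1,k_2$ by the dyadic monotonicity argument with $\ln \bar k_i\asymp N_i$. The only cosmetic difference is that the paper retains the full geometric tails $\sum_{k_i\ge n_i}2^{-\varepsilon_i q_i k_i}$ and then takes the supremum over $\varepsilon$, whereas you restrict to the window $N_i\le k_i\le 2N_i$ with $\varepsilon_i=1/N_i$ fixed (a step that is also insensitive to the sign misprint in~\eqref{e8}); both yield the same key estimate $n_1^{1/q_1-1/2}n_2^{1/q_2-1/2}\bigl[\sum_{m_2\le 2^{n_2}}\sum_{m_1\le 2^{n_1}}(a^{*_2,*_1}_{m_1m_2})^2\bigr]^{1/2}\lesssim\|f\|_{L_{\bar 2,\bar q}}$.
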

\begin{proof}	
We begin by proving the following inequality:
\begin{equation}\label{5.4*}
\sup_{n_2,n_1\in \Bbb N} n_2^{1/q_2-1/2} n_1^{1/q_1-1/2}	\left[ \sum_{m_2=1}^{2^{n_2}} \sum_{m_1=1}^{2^{n_1}}\left(a_{m_1 m_2}^{*_2, *_1}(f)\right)^2\right]^{\frac{1}{2}} \lesssim \|f\|_{L_{\bar 2,\bar  q}}.
\end{equation}
		The statement follows from Theorem~\ref{Te4}. Indeed, let $n_1 ,n_2 \in \Bbb N$	
	\[
	\|a\|_{G^{1/2}\ell^{*}_{\bar2,\bar q}} \geq \sup_{{}_{0<\varepsilon_1\leq 1}^{0<\varepsilon_2\leq 1}}
	\varepsilon_1^{1/2}\varepsilon_2^{1/2}
	\left(\sum_{k_2 =n_2}^\infty\left(\sum_{k_1 =n_1}^\infty\left( 2^{-\varepsilon_1{k_1}-\varepsilon_2{k_2}}\left[ \sum_{m_2=1}^{2^{k_2}} \sum_{m_1=1}^{2^{k_1}}\left(a_{m_1 m_2}^{*_2, *_1}(f)\right)^2\right]^{\frac{1}{2}}\right)^{q_1} \right)^{\frac{q_2}{q_1}}\right)^{\frac{1}{q_2}}
	\]
	$$
	\geq \left[ \sum_{m_2=1}^{2^{n_2}} \sum_{m_1=1}^{2^{n_1}}\left(a_{m_1 m_2}^{*_2, *_1}(f)\right)^2\right]^{\frac{1}{2}} \sup_{{}_{0<\varepsilon_1\leq 1}^{0<\varepsilon_2\leq 1}}
	\varepsilon_1^{1/2}\varepsilon_2^{1/2}    	\left(\sum_{k_2 =n_2}^\infty2^{-\varepsilon_2 q_2{k_2}}\right) ^{1/q_2} 	\left(\sum_{k_1 =n_1}^\infty2^{-\varepsilon_1 q_1{k_1}}\right) ^{1/q_1}            
$$	
$$
\asymp \left[ \sum_{m_2=1}^{2^{n_2}} \sum_{m_1=1}^{2^{n_1}}\left(a_{m_1 m_2}^{*_2, *_1}(f)\right)^2\right]^{\frac{1}{2}} \sup_{{}_{0<\varepsilon_1\leq 1}^{0<\varepsilon_2\leq 1}}
\varepsilon_1^{1/2-1/q_1}\varepsilon_2^{1/2-1/q_2}    	2^{-\varepsilon_2 n_2} \;2^{-\varepsilon_1 {n_1}}             
$$
	$$
\asymp  n_2^{1/q_2-1/2} n_1^{1/q_1-1/2} \left[ \sum_{m_2=1}^{2^{n_2}} \sum_{m_1=1}^{2^{n_1}}\left(a_{m_1 m_2}^{*_2, *_1}(f)\right)^2\right]^{\frac{1}{2}} .
	$$
Taking into account the arbitrariness of the choice of \(n_{1}\) and \(n_{2}\), 
and applying we obtain the desired conclusion.\\
Let $k_1,k_2\in \mathbb{N}, k_i\geq2$, then there exist  $n_1,n_2\in \mathbb{N}$ such that $2^{n_1-1}\leq k_1\leq 2^{k_i}, i=1,2$. Therefore:
	\begin{equation}\label{5.5}
			\begin{aligned}
\frac{1}{(\ln k_1)^{\frac{1}{2}-\frac{1}{q_1}}(\ln k_2)^{\frac{1}{2}-\frac{1}{q_2}}}
\left(\sum_{m_2=1}^{k_2}\sum_{m_1=1}^{k_1}
(a_{m_1 m_2}^{*_2, *_1}(f))^2\right)^{\frac12}\\
\leq 
\frac{1}{n_1^{\frac{1}{2}-\frac{1}{q_1}}n_2^{\frac{1}{2}-\frac{1}{q_2}}}
\left(\sum_{m_2=1}^{2k_2+1}\sum_{m_1=1}^{2k_1+1}
(a_{m_1 m_2}^{*_2, *_1}(f))^2\right)^{\frac12} \\
\leq 4 \sup_{n_2,n_1\in\mathbb{N}}
\frac{1}{n_1^{\frac{1}{2}-\frac{1}{q_1}}n_2^{\frac{1}{2}-\frac{1}{q_2}}}
\left(\sum_{m_2=1}^{2k_2}\sum_{m_1=1}^{2k_1}
(a_{m_1 m_2}^{*_2, *_1}(f))^2\right)^{\frac12},
			\end{aligned}
		\end{equation}
        Thus, $\eqref{5.5}$ follows from $\eqref{5.4*}$. 

        \end{proof}  
\textbf{Acknowledgements} The authors thank Professors Erlan Nursultanov and  Muhammad Asad Zaighum, Arash Ghorbanalizadeh for their invaluable guidance, support, and insightful discussions.\\ 
\textbf{Funding} This research is funded by the Science Committee of MSHE of Kazakhstan (Grant No. AP23488613). \\


\begin{thebibliography}{99}
\bibitem{1}
R.~E.~Edwards,
\textit{Fourier Series: A Modern Introduction}, Vol.~1,
Mir Publishers, Moscow, 1985, 260~pp.

\bibitem{2}
R.~E.~Edwards,
\textit{Fourier Series: A Modern Introduction}, Vol.~2,
Mir Publishers, Moscow, 1985, 399~pp.

\bibitem{3}
A.~Zygmund,
\textit{Trigonometric Series}, Vol.~2,
Mir Publishers, Moscow, 1965, 537~pp.

\bibitem{4}
E.~M.~Stein,
\textit{Singular Integrals and Differentiability Properties of Functions},
Mir Publishers, Moscow, 1973, 342~pp.

\bibitem{5}
W.~H.~Young,
``On the determination of the summability of a function by means of its Fourier constants,''
\textit{Proc. London Math. Soc.}, \textbf{12} (1913), 71--88.

\bibitem{6}
W.~H.~Young,
``On the multiplication of successions of Fourier constants,''
\textit{Proc. Roy. Soc. A}, \textbf{87} (1912), 331--339.

\bibitem{7}
F.~Hausdorff,
``Eine Ausdehnung des Parsevalschen Satzes \"uber Fourierreihen,''
\textit{Math. Z.}, \textbf{16} (1923), 163--169.

\bibitem{8}
F.~Riesz,
``\"Uber eine Verallgemeinerung der Parsevalschen Formel,''
\textit{Math. Z.}, \textbf{18} (1923), 117--124.

\bibitem{9}
S.~Kaczmarz and H.~Steinhaus,
\textit{Theory of Orthogonal Series},
Fizmatgiz, Moscow, 1958, 231~pp. (in Russian).

\bibitem{10}
E.~C.~Titchmarsh,
``A contribution to the theory of Fourier transforms,''
\textit{Proc. London Math. Soc.}, \textbf{23}:2 (1924), 279--289.

\bibitem{11}
E.~C.~Titchmarsh,
\textit{Introduction to the Theory of Fourier Integrals},
Gostekhizdat, Moscow--Leningrad, 1948, 418~pp. (in Russian).

\bibitem{12}
K.~I.~Babenko,
``On an inequality in the theory of Fourier integrals,''
\textit{Izv. Akad. Nauk SSSR Ser. Mat.}, \textbf{25} (1961), 531--542 (in Russian).

\bibitem{13}
W.~Beckner,
``Inequalities in Fourier analysis,''
\textit{Ann. of Math.}, \textbf{102} (1975), 159--182.

\bibitem{14}
G.~H.~Hardy and J.~E.~Littlewood,
``Some new properties of Fourier constants,''
\textit{Math. Ann.}, \textbf{97} (1926), 159--209.

\bibitem{15}
R.~E.~A.~C.~Paley,
``Some theorems on orthogonal functions,''
\textit{Studia Math.}, \textbf{3} (1931), 226--238.

\bibitem{16}
Bochkarev, S.~V. The Hausdorff--Young--Riesz theorem in Lorentz spaces and multiplicative inequalities. 
\textit{Proceedings of MIAN} \textbf{219} (1997), 103--114.
\bibitem{Bari1961}
N.~K.~Bari,
\textit{Trigonometric Series},
Fizmatgiz, Moscow, 1961, 538~pp. (in Russian).
\bibitem{21}
Stein, E.~M. Interpolation of linear operators. \textit{Transactions of the American Mathematical Society}, \textbf{83} (1956), 482--492.

\bibitem{MT13}
N.~T.~Tleukhanova and G.~K.~Mussabayeva,
``On the Fourier coefficients of trigonometric series in the space $L_{2,r}$,''
\textit{Matem. Zametki}, \textbf{94}:6 (2013), 884--888;
\textit{Math. Notes}, \textbf{94}:6 (2013), 908--912.
\bibitem{MT15}
G.~K.~Mussabayeva and N.~T.~Tleukhanova,
``Bochkarev inequality for the Fourier transform of functions in the Lorentz spaces $L_{2,r}$,''
\textit{Eurasian Math. J.}, \textbf{6}:1 (2015), 76--84.
\bibitem{N2000Izv}
E.~Nursultanov,
``On the coefficients of multiple Fourier series in $L_{p}$-spaces,''
\textit{Izv. Math.}, \textbf{64}:1 (2000), 93--120.
\bibitem{22} E. Nursultanov, A. Ghorbanalizadeh, and D. Suragan, \textit{On Herz-Bochkarev Limiting Problem}, arXiv:2506.07478 (2025).
\bibitem{23} M. Saucedo and S. Tikhonov, \textit{The Fourier Transform is an Extremizer of a Class of Bounded Operators}, arXiv:2501.17505 (2025).
\bibitem{Nursultanov2026}
Nursultanov, E. \textit{Inequalities and Integral Operators in Function Spaces}. 
CRC Press, Boca Raton, FL, 2026. 
ISBN: 978-1-041-12684-8 (hbk); 978-1-041-12781-9 (pbk); 978-1-003-66656-1 (ebk). 
DOI: 10.1201/9781003666561.
\bibitem{NursultanovRafeiroSuragan2025}
Nursultanov, E.~D., Rafeiro, H., and Suragan, D. 
Convolution-type operators in grand Lorentz spaces. 
\textit{Analysis and Mathematical Physics} \textbf{15}, 65 (2025). 
https://doi.org/10.1007/s13324-025-01049-7.
\bibitem{Manarbek} Manarbek M., Tleukhanova N.T., Mussabayeva G.K. 
Anisotropic Grand Lorentz Spaces and Their Properties. 
Herald of the Kazakh-British Technical University. 2025;22(2):207–219. (In Kazakh) 
https://doi.org/10.55452/1998-6688-2025-22-2-207-219

\bibitem{N2004}
E.~D.~Nursultanov,
``Interpolation theorems for anisotropic function spaces and their applications,''
\textit{Dokl. Math.}, \textbf{69}:1 (2004), 16--19.

\bibitem{TS19}
N.~T.~Tleukhanova and K.~K.~Sadykova,
``O'Neil-type inequalities for convolutions in anisotropic Lorentz spaces,''
\textit{Eurasian Math. J.}, \textbf{10}:3 (2019), 68--83.

\bibitem{bookNursultanov}
Nursultanov, E. (2026). Inequalities and Integral Operators in Function Spaces (1st ed.). Chapman and Hall/CRC. https://doi.org/10.1201/9781003666561

\end{thebibliography}
\end{document}